\newif\ifcol
\newcommand{\colorr}{\color[rgb]{0.8,0,0}}
\newcommand{\colorg}{\color[rgb]{0,0.5,0}}
\newcommand{\colorr}{\color{black}}
\newcommand{\colorg}{\color{black}}
\newtheorem{lemma}{Lemma}
\newtheorem{theorem}{Theorem}
\newtheorem{example}{Example}
\newtheorem{corollary}{Corollary}
\begin{document}
\title{
On Asymptotic Properties of Bayes Type Estimators\\ with General Loss Functions}
\author{Teppei Ogihara$^*$\\
$*$ 
\begin{small}Center for the Study of Finance and Insurance, Osaka University\end{small}\\
\begin{small}Japan Science and Technology Agency, CREST\end{small}\\
\begin{small}1-3 Machikaneyama-cho, Toyonaka, Osaka 560-8531, Japan. \end{small}\\
\begin{small}e-mail: ogihara@sigmath.es.osaka-u.ac.jp\end{small}
}
\date{}
\maketitle

\noindent
{\bf Abstract.}
We study asymptotic behaviors of Bayes type estimators 
and give sufficient conditions to obtain asymptotic limit distribution of estimation error.
We assume polynomial type large deviation inequalities and 
prove asymptotic equivalence of estimation error of Bayes type estimator and that of M-estimator
by the virtue of Ibragimov-Has'minskii's theory.
The results can be applied to several statistical models of diffusion processes and jump diffusion processes.
In this paper, we focus on application to a statistical model of an ergodic diffusion process
and give asymptotic normality and convergence of moments of the Bayes type estimator with a general loss function.  

\noindent
{\bf Keywords.}
asymptotic normality, Bayes type estimation, convergence of moments, ergodic diffusion processes, polynomial type large deviation inequalities

\section{Introduction}

The theory of random fields of likelihood ratios is a powerful tool
to investigate asymptotic behaviors of Bayes type estimators.
This theory is initiated by Ibragimov and Has'minskii \cite{ibr-has01,ibr-has02,ibr-has03}
and applied to statistical models of regular i.i.d. observations and white Gaussian noise models. 
After that, Kutoyants applied Ibragimov-Has'minskii's theory to some statistical models including models of diffusion type processes and point processes. 
See Kutoyants \cite{kut01,kut02} for the details. 
Moreover, Yoshida \cite{yos06,yos05} introduced polynomial type large deviation inequalities
and gave a scheme to obtain asymptotic properties of the M-estimator and the Bayes type estimator
under some moment conditions of a contrast function and its derivatives. 
\begin{discuss}
{\colorr Ibragimovのスキームがexponential deviationであることをコメントするか}
\end{discuss}
This scheme can be applied to many classes of statistical models and gives consistency, asymptotic (mixed) normality and convergence of moments 
of quasi-maximum likelihood estimators and Bayes type estimators. 
See Yoshida \cite{yos06,yos05} for an application to statistical models of ergodic diffusion processes,
Ogihara and Yoshida \cite{ogi-yos01} for models of ergodic jump diffusion processes,
Masuda \cite{mas03} for models of Ornstein-Uhlenbeck processes driven by heavy-tailed symmetric L\'evy processes,
Uchida and Yoshida \cite{uch-yos02} for models of diffusion processes observed in a fixed interval,
Ogihara and Yoshida \cite{ogi-yos02} for models of diffusion processes with nonsynchronous observations.

One of the most important motivations to study quasi-maximum likelihood estimators and Bayes type estimators
is that these estimators are asymptotically efficient in several models.
For statistical models of regular i.i.d. observations, we obtain minimax theorems for estimation errors
and hence can define asymptotic efficiency of estimators.
Since the maximum likelihood estimator and the Bayes estimator attain this bound, these estimators are asymptotically efficient.
See Ibragimov and Has'minskii \cite{ibr-has03}.
We also have asymptotic efficiency of quasi-maximum likelihood estimators and Bayes type estimators for some statistical models of diffusion processes with discrete observations.
Jeganathan \cite{jeg} extended the results of minimax theorems to statistical models satisfying the local asymptotic mixed normality (LAMN) property.
Moreover, Gobet \cite{gob} proved the LAMN property for models of diffusion processes observed in a fixed interval and
the estimators proposed in Genon-Catalot and Jacod \cite{gen-jac93} have the asymptotic minimal variance. 
Gobet \cite{gob02} proved LAN property for statistical models of ergodic diffusion processes, 
\begin{discuss}
{\colorr Yoshida (2006,2011)の推定量(他にあるか?)がefficientとなっている?}
\end{discuss}
and Ogihara \cite{ogi01} gives the LAMN property and asymptotic efficiency of the quasi-maximum likelihood estimator
and the Bayes type estimator proposed in Ogihara and Yoshida \cite{ogi-yos02} for models of diffusion processes with nonsynchronous observations in a fixed interval.

Yoshida \cite{yos06,yos05} applied the results of polynomial type large deviation inequalities to the Bayes type estimator for the quadratic loss function
and obtained asymptotic properties of the estimator. 
The Bayes type estimator for the quadratic loss function can be obtained as a ratio of certain integrals with respect to the parameter, 
and hence can be specified asymptotic behaviors by using polynomial type large deviation inequalities.
On the other hand, Ibragimov and Has'minskii \cite{ibr-has03} treated a wider class of loss functions.
Though their results are for models for i.i.d. observations, we can apply their ideas to models satisfying polynomial type large deviation inequalities,
and can prove asymptotic properties of Bayes type estimators for general loss functions, 
which is the subject of this paper.

In this paper, we prove asymptotic equivalence of the estimation error of the Bayes type estimator and that of the M-estimator.
Thus we obtain the asymptotic distribution of the estimation error of the Bayes type estimator if we have an asymptotic distribution of the M-estimator.
In particular, we see that the asymptotic distribution for the Bayes type estimator does not depend on loss functions.
These results can be applied to models of ergodic diffusion processes, diffusion processes observed in a fixed interval,
ergodic jump diffusion processes and diffusion processes with nonsynchronous observation,
and we obtain asymptotic (mixed) normality and convergence of moments for Bayes type estimators for general loss functions.
Convergence of moments is important when we study the asymptotic expansion of estimators and information criteria.
We focus on an application to models of ergodic diffusion processes in this paper.

This paper is organized as follows.
Section \ref{main-results-section} presents the theories of random fields of likelihood ratio and polynomial type large deviation inequalities,
and we state our main results.
Section \ref{ergodic-diffusion-section} is devoted to an application of main results to statistical models of ergodic diffusion processes.
The proofs of main results are in Section \ref{proofs-section}.

\section{Main results}\label{main-results-section}

We first introduce Ibragimov-Has'minskii's theory of random fields of likelihood ratios.
For computational efficiency, it is reasonable to construct estimators separately for certain subspaces in some statistical models, as seen in Uchida and Yoshida \cite{uch-yos01} and Yoshida \cite{yos05}.
Therefore, we define our model so that it contains these situations. 

Let $K\in\mathbb{N}$, $1\leq k\leq K$, the parameter space $\Theta_k\subset \mathbb{R}^{d_k}$ be a bounded open set $(1\leq k\leq K)$
and $\Theta:=\Theta_1\times \Theta_2\times \cdots \times \Theta_K\subset \mathbb{R}^d$, where $d=\sum_{k=1}^Kd_k$.
If $K\geq 2$, we assume $\Theta_k$ is a convex set for $1\leq k\leq K$.
\begin{discuss}
{\colorr $K\geq 2$の時は, $H_T$に$\underline{\tilde{\theta}}_{k-1}$を入れたり, $\overline{\theta^{\star}}_{k+1}$と$\overline{\hat{\theta}}_{k+1}$をつなぐ線分を考えたりするので, convexityが必要.}
\end{discuss}
Let $(\mathcal{X},\mathcal{A},\{P_{\theta}\}_{\theta})$ be a statistical experiment.
\begin{discuss}
{\colorr that is, $(\mathcal{X}, \mathcal{A})$ be a measurable space and $\{P_{\theta}\}_{\theta\in\Theta}$ is a family of probability measures on $(\mathcal{X},\mathcal{A})$.}
\end{discuss}
Let a random field $H_T:\Theta \times \mathcal{X} \to \mathbb{R}$ be a $C^3$ function with respect to $\theta$ 
and continuously extended as a function on ${\rm clos}(\Theta)\times \mathcal{X}$ for $T>0$, where ${\rm clos}(\Theta)$ represents the closure of $\Theta$.

Let $\overline{\Theta}_k=\Theta_k\times \Theta_{k+1}\times \cdots \times \Theta_K$, 
$\overline{\theta}_k=(\theta_k,\theta_{k+1},\cdots,\theta_K)$ and $\underline{\theta}_k=(\theta_1,\cdots, \theta_k)$ 
for any value $\theta=(\theta_1,\cdots, \theta_K)\in\Theta$,
$a^k_T\in {\rm GL}(d_k)$, 
$b^k_T=(\lambda_{\min}((a^k_T)^{\top}a^k_T))^{-1}\to\infty$,
$\lambda_{\max}((a^k_T)^{\top}a^k_T)\leq C_1(b^k_T)^{-1}$,
where $\top$ represents transpose of a matrix and $\lambda_{\max}(A)$ and $\lambda_{\min}(A)$ represent the maximum and the minimum of eigenvalues of a matrix $A$, respectively.

The theory of Ibragimov-Has'minskii, Kutoyants and Yoshida works on a random field $Z^k_T$ defined by 
\begin{equation*}
Z^k_T(u_k;\underline{\theta}_{k-1},\theta^{\ast}_k,\overline{\theta}_{k+1})=\exp\{H_T(\underline{\theta}_{k-1}, \theta^{\ast}_k+a^k_Tu_k,\overline{\theta}_{k+1})-H_T(\underline{\theta}_{k-1},\theta^{\ast}_k,\overline{\theta}_{k+1})\}.
\end{equation*}
If $H_T$ is a likelihood function of i.i.d. observations, $Z^k_T$ is the original random field of likelihood ratio in Chapter I of Ibragimov and Has'minskii \cite{ibr-has03}.
On the other hand, Yoshida \cite{yos05} worked on $Z^k_T$ when $H_T$ is a general function and studied asymptotic properties 
of the M-estimator and the Bayes type estimator defined by $H_T$ when $T\to\infty$.
M-estimator $\hat{\theta}$ is a random variable defined by  
\begin{equation*}
\hat{\theta}=\hat{\theta}_T=(\hat{\theta}^1_T,\cdots, \hat{\theta}^K_T)={\rm argmax}_{\theta\in {\rm clos}(\Theta)}H_T(\theta).
\end{equation*}
If $H_T$ is the `{\it true}' likelihood function of the statistical model, $\hat{\theta}_T$ is the maximum likelihood estimator,
and if $H_T$ is a `{\it quasi}'-likelihood function, $\hat{\theta}_T$ is called a quasi-maximum likelihood estimator.

To define a Bayes type estimator, we consider a class of loss functions introduced in Section 1.2. of Ibragimov and Has'minskii \cite{ibr-has03}.
For $p>0$ and $1\leq k\leq K$, let ${\bf W}_{p,k}$ be a set of functions $w_k: \mathbb{R}^{d_k} \to [0,\infty)$ satisfying following four properties:
\begin{enumerate}
\item $w_k(0)=0$ and $w_k(u_k)$ is continuous at $u_k=0$ but not identically $0$.
\item $w_k(u_k)=w_k(-u_k)$ for any $u\in\mathbb{R}^{d_k}$.
\item The sets $\{u_k;w_k(u_k)<c\}$ are convex sets for all $c>0$ and are bounded for all $c>0$ sufficiently small.
\item There exists a constant $C>0$ such that $w_k(u_k)\leq C(1+|u_k|^p)$ for $u_k\in\mathbb{R}^{d_k}$. 
\end{enumerate}
Let ${\bf W}_p=\{(w_1,\cdots,w_K);w_k\in{\bf W}_{p,k} \ (1\leq k\leq K)\}$ and ${\bf W}=\cup_{p>0}{\bf W}_p$.

The following is examples of loss functions in Section 1.2. of Ibragimov and Has'minskii \cite{ibr-has03}.
\begin{example}\label{loss-function-ex}
\begin{enumerate} 
\item Let $p>0$ and $w_k(u_k)=|u_k|^p$ for $u_k\in \mathbb{R}^{d_k}$. Then we can easily see $w_k\in{\bf W}_{p,k}$.
\item Let $A$ be a centrally-symmetric bounded convex set such that $0$ is in the interior of $A$ and $w_k:\mathbb{R}^{d_k}\to [0,\infty)$ be defined by
\begin{equation*}
w_k(u_k)=\left\{
\begin{array}{ll}
0, & {\rm if} \  u_k\in A, \\
1, & {\rm if} \  u_k\not\in A.
\end{array}
\right.
\end{equation*}
Then we obviously have $w_k\in\cap_{p>0}{\bf W}_{p,k}$.
\end{enumerate}
\end{example}

Let $\mathcal{K}$ be a compact subset of $\Theta$, 
\begin{discuss}
{\colorr $\mathcal{K}$のcptnessはTheorem \ref{main}にもTheorem \ref{main2}にも使う.}
\end{discuss}
a prior density function $\pi_k:\Theta_k\to [0,\infty)$ be a continuous function satisfying $0<\inf_{\theta^{\ast}\in\mathcal{K}}\pi_k(\theta^{\ast}_k)$ and $\sup_{\theta_k\in\Theta_k}\pi_k(\theta_k)<\infty$ for $1\leq k\leq K$.
Let $\theta^{\star}\in\Theta$ and $w=(w_1,\cdots,w_K)\in {\bf W}$. 
An {\bf adaptive Bayes type estimator} $\tilde{\theta}_T=(\tilde{\theta}^1_T,\cdots \tilde{\theta}^K_T)$ is a random variable satisfying
\begin{equation*}
\tilde{\theta}^k_T=\tilde{\theta}^k_T(w)={\rm argmin}_{z_k}\int_{\Theta_k}w_k((a^k_T)^{-1}(z_k-\theta_k))\exp(H_T(\underline{\tilde{\theta}}_{k-1},\theta_k,\overline{\theta^{\star}}_{k+1}))\pi_k(\theta_k)d\theta_k \quad (1\leq k\leq K),
\end{equation*} 
where $\underline{\tilde{\theta}}_{k-1}=\underline{\tilde{\theta}_T}_{k-1}$.
\begin{discuss}
{\colorr $a^k_T$を$\theta^{\ast}$に依るとここでベイズ推定量が真の値を用いて定義されるからまずい.}
\end{discuss}
If the loss function $w$ is any function in Example \ref{loss-function-ex}, 
it is easy to see that an adaptive Bayes type estimator exists.
\begin{discuss}
{\colorr $w$の中身に$(a^k_T)^{-1}$を掛けるから, 漸近的にBayes推定量の存在を言うのは厳しいか.}
\end{discuss}

Adaptive estimation is an estimation method to reduce the calculation cost by calculating estimators separately for each $\Theta_k$.
In certain statistical models like models of ergodic diffusion processes or models of ergodic jump diffusion processes, 
we can adaptively calculate (quasi-)maximum likelihood estimators and Bayes (type) estimators
with the same asymptotic variance as that of {\it simultaneous} estimation.
See Uchida and Yoshida \cite{uch-yos01}, Yoshida \cite{yos05}, Ogihara and Yoshida \cite{ogi-yos01}.
The usual {\it simultaneous} estimation is contained in our setting as the case $K=1$.

\begin{discuss}
{\colorg
$a^1_T$の中でレートを変えることはできるのか? できるなら$K=1$の枠組みで扱えるか?
}
{\colorr
PLDを示すときやベイズ推定量の一致性・漸近正規性には, $H$の微分可能性が必要なかったが, 
この論文では, $\log Z(u)-\Gamma[\hat{u}_T,\hat{u}_T]/2+\Gamma[u_k-\hat{u}^k_T,u_k-\hat{u}^k_T]\to^p 0$
を示すのに本質的に微分可能性が必要か. 
}
\end{discuss}

Let $\hat{u}^k_T=(a^k_T)^{-1}(\hat{\theta}^k_T-\theta^{\ast}_k)$, $\tilde{u}^k_T=\tilde{u}^k_T(w)=(a^k_T)^{-1}(\tilde{\theta}^k_T(w)-\theta^{\ast}_k)$,
$\hat{u}_T=(\hat{u}^1_T,\cdots, \hat{u}^K_T)$, $\tilde{u}_T=(\tilde{u}^1_T,\cdots, \tilde{u}^K_T)$,
$U^k_T(\theta^{\ast}_k)=\{u_k\in\mathbb{R}^{d_k};\theta^{\ast}_k+a^k_Tu_k\in \Theta_k\}$, $V^k_T(r,\theta^{\ast}_k)=\{u_k\in U^k_T(\theta^{\ast}_k);r\leq |u|\}$
and $\psi^k_T(z)=\int_{U^k_T(\theta^{\ast})} w_k(u_k-z)Z^k_T(u_k;\underline{\tilde{\theta}}_{k-1},\theta^{\ast}_k,\overline{\theta^{\star}}_{k+1})\pi_k(\theta^{\ast}_k+a^k_Tu_k)du_k$,
then $\tilde{u}^k_T$ minimizes the function $\psi^k_T(z)$. 
\begin{discuss}
{\colorr Lem \ref{psi-large-est-lemma}でもminimizeする事実を使うし, 重要な性質なので残しておく.}
\end{discuss}
Moreover, we define a random bilinear form $\Gamma^k(\overline{\theta}_{k+1},\theta^{\ast}):\mathbb{R}^{d_k}\times \mathbb{R}^{d_k}\times \mathcal{X}\to \mathbb{R}$ and
\begin{equation*}
\Gamma^k_T(\theta_k,\overline{\theta}_{k+1})[u_k,u_k]=-\partial_{\theta_k}^2H_T(\underline{\tilde{\theta}}_{k-1},\overline{\theta}_k)[a^k_Tu_k,a^k_Tu_k].
\end{equation*}
To avoid redundancy, we denote by $T_0$ a positive constant varying from line to line.  \\

We will state assumptions to obtain asymptotic properties of the Bayes type estimator $\tilde{\theta}_T$.
We will consider uniform estimate in $\theta^{\ast}\in \mathcal{K}$ to obtain convergence results of the estimator uniformly in $\theta^{\ast}\in\mathcal{K}$.
The first one is so-called polynomial type large deviation inequalities. Let $p>0$.
\begin{description}
\item{[$A1\mathchar`- p$]} For $1\leq k\leq K$, there exists $L>1$ and $D_k>d_k+p$ such that
\begin{equation*}
\sup_{\theta^{\ast}\in \mathcal{K},T\geq T_0}P_{\theta^{\ast}}\bigg[\sup_{u_k\in V^k_T(r,\theta^{\ast}_k)}Z^k_T(u_k;\underline{\tilde{\theta}}_{k-1},\theta^{\ast}_k,\overline{\theta^{\star}}_{k+1})\geq \frac{C_1}{r^{D_k}}\bigg]\leq \frac{C_2}{r^L} \quad (r>0).
\end{equation*}
\end{description}
This condition enables us to estimate tail probability of the estimation error
and plays an important role in the proof of asymptotic properties of the Bayes type estimator.
Sufficient conditions of $[A1\mathchar`-p]$ can be found in Yoshida \cite{yos05}.
He proved these inequalities by assuming some conditions on $H_T$ and its derivatives.

Moreover, we assume the following conditions $[A2]$-$[A4]$ with respect to $\Gamma^k$ and the derivatives of $H_T$.
\begin{description}
\item{[$A2$]} For $1\leq k\leq K$, $\{(b^k_T)^{-1}\sup_{\overline{\theta}_k\in \overline{\Theta}_k}|\partial_{\theta_k}^3H_T(\underline{\tilde{\theta}}_{k-1},\theta_k,\overline{\theta}_{k+1})|\}_{T\geq T_0}$ is $P_{\theta^{\ast}}$-tight uniformly $\theta^{\ast}\in \mathcal{K}$,
that is, for any $\epsilon>0$, there exists $M>0$ such that
\begin{equation*}
\sup_{\theta^{\ast}\in \mathcal{K}}\sup_{T\geq T_0}P_{\theta^{\ast}}\bigg[(b^k_T)^{-1}\sup_{\overline{\theta}_k\in \overline{\Theta}_k}|\partial_{\theta_k}^3H_T(\underline{\tilde{\theta}}_{k-1},\theta_k,\overline{\theta}_{k+1})|>M\bigg]<\epsilon.
\end{equation*}
Moreover, $\Gamma^k(\overline{\theta^{\star}}_{k+1},\theta^{\ast})$ is tight uniformly in $\theta^{\ast}\in\mathcal{K}$ and 
$\Gamma^k_T(\theta^{\ast}_k,\overline{\theta^{\star}}_{k+1})\to \Gamma^k(\overline{\theta^{\star}}_{k+1},\theta^{\ast})$
as $T\to\infty$ in $P_{\theta^{\ast}}$-probability uniformly in $\theta^{\ast}\in\mathcal{K}$.
\begin{discuss}
{\colorr $\Gamma^k$は一つの確率変数が$\theta^{\ast}\in\mathcal{K}$に対してuniformlyにtightであるといっている.}
\end{discuss}
\item{[$A3$]} For $1\leq k<l\leq K$, $\sup_{\overline{\theta}_k\in\overline{\Theta}_k}|\partial_{\theta_k}\partial_{\theta_l}H_T(\underline{\tilde{\theta}}_{k-1},\overline{\theta}_k)a^k_T|\to 0$ in $P_{\theta^{\ast}}$-probability uniformly in $\theta^{\ast}\in \mathcal{K}$.

\item{[$A4$]}
At least one of the following two conditions holds true.
\begin{enumerate}
\item $\mathcal{K}=\{\theta^{\ast}\}$ for some $\theta^{\ast}\in\Theta$ and $\Gamma^k(\overline{\theta^{\star}}_{k+1},\theta^{\ast})>0$, $P_{\theta^{\ast}}$-a.s. for $1\leq k\leq K$.
\item $\mathcal{X}$ is a Polish space, $\mathcal{A}$ is the sets of all Borel subsets of $\mathcal{X}$, 
$\Gamma^k(\overline{\theta^{\star}}_{k+1},\theta^{\ast})(x)$ is continuous with respect to $x\in \mathcal{X}$ for $\theta^{\ast}\in\mathcal{K}$,
and $\{P_{\theta^{\ast}}\}_{\theta^{\ast}\in\mathcal{K}}$ is continuous with respect to weak topology.
Moreover, for any $\epsilon,\delta>0$, there exist $\eta>0$ such that
$\sup_{\theta^{\ast}\in\mathcal{K}}P_{\theta^{\ast}}[\lambda_{\min}(\Gamma^k(\overline{\theta^{\star}}_{k+1},\theta^{\ast}))\leq \eta]< \epsilon$
and 
\begin{equation*}
\sup_{\theta^{\ast}\in\mathcal{K}}\sup_{\theta,\theta'\in\mathcal{K};|\theta-\theta'|\leq \eta}P_{\theta^{\ast}}[|\Gamma^k(\overline{\theta^{\star}}_{k+1},\theta')-\Gamma^k(\overline{\theta^{\star}}_{k+1},\theta)|>\delta]<\epsilon
\end{equation*}
for $1\leq k\leq K$.
\end{enumerate}
\end{description}
Condition $[A4]$ is a condition on $\Gamma^k$. If $\mathcal{K}$ consists of one point, that is, we do not consider uniform convergence,
then we need only nondegeneracy of $\Gamma^k$. 
However, we need more conditions on $\Gamma^k$ to obtain uniform convergence.
Conditions $[A2]$ and $[A4] \ 1.$ are usually obtained when we prove polynomial type large deviation inequalities by the scheme of Yoshida \cite{yos05}.
We can easily verify Condition $[A3]$ for statistical models of ergodic diffusion processes and ergodic jump diffusion processes. See Yoshida \cite{yos05} and Ogihara and Yoshida \cite{ogi-yos01}.
Moreover, this condition is nothing if $K=1$.
Since we obtain an explicit form of $\Gamma^k$ for several statistical models, Condition $[A4] \ 2.$ is often not difficult to verify.  

We also assume the following condition for the loss function. 
\begin{description}
\item{[$A5$]} For any $M>0$, there exists $M'>0$ such that
\begin{equation}\label{far-wk-est}
\sup\{w_k(u_k);|u_k|\leq M\}-\inf\{w_k(u_k);|u_k|\geq M'\}\leq 0
\end{equation}
for $1\leq k\leq K$.
\end{description}
This type of condition is necessary 
to obtain asymptotic properties of Bayes estimator in models of i.i.d. observation. See Theorem 5.2. in Chapter I of Ibragimov and Has'minskii \cite{ibr-has03}.

Finally, we assume some conditions on the M-estimator and the Bayes type estimator.
\begin{description}
\item{[$A6$]} The sequence $\{\hat{u}_T\}_{T\geq T_0}$ is $P_{\theta^{\ast}}$-tight uniformly in $\theta^{\ast}\in\mathcal{K}$.
\item{[$A7$]} An adaptive Bayes type estimator $\tilde{\theta}_T$ exists $P_{\theta^{\ast}}$-a.s. for any $\theta^{\ast}\in\mathcal{K}$ and $T\geq T_0$.
\end{description}

\begin{theorem}\label{main}
Let $p>0$ and $w\in{\bf W}_p$. Assume $[A1\mathchar`-p]$ and $[A2]$-$[A7]$. 
Then $\tilde{u}_T(w)-\hat{u}_T\to 0$ as $T\to\infty$ in $P_{\theta^{\ast}}$-probability uniformly in $\theta^{\ast}\in \mathcal{K}$.
\end{theorem}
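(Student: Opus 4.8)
The plan is to establish the equivalence block by block. Because $K$ is finite and the field $Z^k_T$, the form $\Gamma^k_T$, and the hypotheses $[A1\mathchar`-p]$, $[A2]$ are all stated with the adaptive estimates $\underline{\tilde{\theta}}_{k-1}$ already substituted, it suffices to prove $\tilde{u}^k_T(w)-\hat{u}^k_T\to 0$ for each fixed $k$, uniformly in $\theta^{\ast}\in\mathcal{K}$. Throughout I would work with the normalized objective
\[
z \mapsto \psi^k_T(z)\Big/\int_{U^k_T(\theta^{\ast})} Z^k_T(u_k;\underline{\tilde{\theta}}_{k-1},\theta^{\ast}_k,\overline{\theta^{\star}}_{k+1})\,\pi_k(\theta^{\ast}_k + a^k_T u_k)\,du_k,
\]
which has the same minimizer $\tilde{u}^k_T$ and is the Bayes risk for the posterior proportional to $Z^k_T\pi_k$; the denominator is finite and positive precisely because $[A1\mathchar`-p]$ with $D_k>d_k+p$ controls the polynomial tails of $Z^k_T$ against the growth $w_k(u_k)\le C(1+|u_k|^p)$ from property $4$.

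First I would localize. By $[A6]$ the centers $\hat{u}_T$ are tight uniformly in $\theta^{\ast}$, so I may restrict to $|\hat{u}^k_T|\le R$. For the Bayes side, I would combine $[A1\mathchar`-p]$ with the loss condition $[A5]$: if the candidate $z$ were far from the bulk of the posterior, the factor $w_k(u_k-z)$ would exceed the loss incurred near the bulk, so $\psi^k_T(z)$ could not be minimal; this confines $\tilde{u}^k_T$ to a ball of radius $R'$ with probability tending to $1$ uniformly in $\theta^{\ast}$. Hence the argument reduces to the compact region $\{|u_k|\le R'\}$. Next I would carry out the quadratic expansion of the log-field. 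A third-order Taylor expansion of $H_T$ in the $k$-th coordinate gives
\[
\log Z^k_T(u_k) = \langle \Delta^k_T, u_k\rangle - \tfrac12\,\Gamma^k_T(\theta^{\ast}_k,\overline{\theta^{\star}}_{k+1})[u_k,u_k] + R^k_T(u_k),
\]
where $\Delta^k_T$ is the normalized score and, by $[A2]$ together with $\lambda_{\max}((a^k_T)^{\top}a^k_T)\le C_1(b^k_T)^{-1}$, the remainder satisfies $\sup_{|u_k|\le R'}|R^k_T(u_k)|=O_p((b^k_T)^{-1/2})\to 0$ uniformly in $\theta^{\ast}$. Using $\Gamma^k_T\to\Gamma^k$ from $[A2]$, the first-order condition at the joint maximizer, and $[A3]$ to absorb the mismatch between the argument $\overline{\theta^{\star}}_{k+1}$ used in $Z^k_T$ and the block $\overline{\hat{\theta}}_{k+1}$ of the M-estimator (the cross second derivatives scaled by $a^k_T$ vanish), I would rewrite this, after completing the square, as
\[
\log Z^k_T(u_k) = -\tfrac12\,\Gamma^k[u_k - \hat{u}^k_T, u_k - \hat{u}^k_T] + c^k_T + o_p(1),
\]
with $c^k_T$ not depending on $u_k$ and the $o_p(1)$ uniform on $\{|u_k|\le R'\}$ and in $\theta^{\ast}$.

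Finally, substituting this approximation and using continuity of $\pi_k$ (so that $\pi_k(\theta^{\ast}_k+a^k_Tu_k)$ is asymptotically constant on the compact region), the normalized objective equals, up to a uniformly negligible error, a quantity proportional to
\[
\Phi^k_T(z) = \int w_k(u_k - z)\,\exp\{-\tfrac12\,\Gamma^k[u_k - \hat{u}^k_T, u_k - \hat{u}^k_T]\}\,du_k.
\]
After the change of variables $v=u_k-\hat{u}^k_T$, $s=z-\hat{u}^k_T$, this becomes $\int w_k(v-s)\exp\{-\frac12\Gamma^k[v,v]\}dv$. Since $w_k$ is even with convex sublevel sets (properties $2$ and $3$) and the Gaussian kernel is even and unimodal, a symmetrization (Anderson-type) argument shows this is minimized exactly at $s=0$, that is, at $z=\hat{u}^k_T$. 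To pass from convergence of the objective to convergence of its argmin, I would use $[A4]$ (with $[A2]$) to guarantee that $\Gamma^k$ is nondegenerate uniformly in $\theta^{\ast}$, which makes the limiting minimum well separated; together with the uniform-on-compacts convergence and the localization, this yields $\tilde{u}^k_T-\hat{u}^k_T\to 0$ in $P_{\theta^{\ast}}$-probability uniformly in $\theta^{\ast}\in\mathcal{K}$.

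I expect the main obstacle to be this last step: converting the uniform-on-compacts convergence of the Bayes risk functional into convergence of its minimizer, uniformly in $\theta^{\ast}$ and over the random form $\Gamma^k$. This requires a quantitative Anderson-type lower bound on $\Phi^k_T(z)-\Phi^k_T(\hat{u}^k_T)$ that is uniform in $\theta^{\ast}$---exactly what the delicate continuity and nondegeneracy hypotheses in $[A4]$ are designed to supply---together with careful bookkeeping of the tail estimates from $[A1\mathchar`-p]$ so that the reduction to a compact set is itself uniform.
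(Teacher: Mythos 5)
Your proposal is correct and follows essentially the same route as the paper: localization of $\tilde{u}^k_T$ via $[A1\mathchar`-p]$ and $[A5]$ (the paper's Lemma \ref{psi-large-est-lemma}, done block by block, i.e.\ by induction on $k$), the quadratic expansion with completion of the square around $\hat{u}^k_T$ (Lemma \ref{logZ-lim-lemma}), and an Anderson-type strict-minimum argument for the Gaussian-weighted loss made uniform in $\theta^{\ast}$ via $[A4]$ (Lemma \ref{G-est-lemma}, which invokes Lemma 2.10.2 of Ibragimov and Has'minskii). The only cosmetic difference is that you normalize the Bayes objective by the posterior mass, whereas the paper compares the unnormalized $\psi^k_T$ at $\hat{u}^k_T$ against its infimum over $\{\delta\leq|y-\hat{u}^k_T|\leq R_1\}$ directly.
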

This theorem implies that if we specify the asymptotic distribution of the estimation error of M-estimator, 
the estimation error of Bayes type estimator converges to the same limit.\\

We consider convergence of moments in the following. 
Let $\eta\in (0,1)$.
\begin{description}
\item{[$C1\mathchar`-\eta$]} There exists $r_0>0$ such that the loss function $w_k$ satisfies
\begin{equation*}
\inf_{r\geq r_0}\inf_{|u_k|\leq r^{\eta},|z|\geq r}(w_k(u_k-z)-w_k(u_k))\geq 0
\end{equation*} 
for $1\leq k\leq K$. 
\item{[$C2\mathchar`-q,\eta$]} There exist $L_1>q/\eta$ and $\{C_R\}_{R>0}\subset (0,\infty)$ such that 
\begin{equation*} 
\sup_{\theta^{\ast}\in\mathcal{K},T\geq T_0}P_{\theta^{\ast}}\bigg[\inf_{|u_k|\leq R}\log(Z^k_T(u_k;\underline{\tilde{\theta}}_{k-1},\theta^{\ast}_k,\overline{\theta^{\star}}_{k+1}))\leq -r\bigg]\leq \frac{C_R}{r^{L_1}}
\end{equation*}
for any $R>0$ and $r>0$.
\item{[$C3\mathchar`-q,\eta$]} There exist $L_2>q/\eta$ and $C>0$ such that
\begin{equation*}
\sup_{\theta^{\ast}\in \mathcal{K},T\geq T_0}P_{\theta^{\ast}}\bigg[\sup_{u_k\in V^k_T(r,\theta^{\ast}_k)}Z^k_T(u_k;\underline{\tilde{\theta}}_{k-1},\theta^{\ast}_k,\overline{\theta^{\star}}_{k+1})\geq e^{-r}\bigg]\leq \frac{C}{r^{L_2}}
\end{equation*}
for any $r>0$.
\end{description}
Condition $[C3\mathchar`-q,\eta]$ is another version of polynomial type large deviation inequalities, 
and also proved by the scheme of Yoshida \cite{yos05}.
Condition $[C2\mathchar`-q,\eta]$ is usually obtained when we use the scheme of Yoshida \cite{yos05}.  
It is easy to see that $[C1\mathchar`-\eta]$ implies $[A5]$ for any $\eta\in(0,1)$.
\begin{discuss}
{\colorr $w_k(u^1_k)>w_k(u^2_k)$, $|u^1_k|\leq r^{\eta},|u^2_k|\geq r+r^{\eta}$とすると, $z=u^1_k-u^2_k,u_k=u^1_k$とおくと, $[C1\mathchar`-\eta]$に矛盾.}
\end{discuss}

We denote by $E_{\theta^{\ast}}$ the expectation with respect to $P_{\theta^{\ast}}$.
\begin{theorem}\label{main2}
Let $\eta\in (0,1)$, $q>1$ and $w\in{\bf W}$. Assume $[A7]$,$[C1\mathchar`-\eta],[C2\mathchar`-q,\eta],[C3\mathchar`-q,\eta]$ and $\inf_{\theta_k\in\Theta_k}\pi_k(\theta_k)>0$.
Then there exists $T_0>0$ such that $\sup_{\theta^{\ast}\in\mathcal{K},T\geq T_0}E_{\theta^{\ast}}[|\tilde{u}_T(w)|^q]<\infty$. 
\end{theorem}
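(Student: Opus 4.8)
The plan is to treat each coordinate block $k$ separately and then assemble, using that $|\tilde u_T(w)|^q\le K^{q/2}\sum_{k=1}^K|\tilde u^k_T(w)|^q$; it therefore suffices to prove $\sup_{\theta^\ast\in\mathcal K,\,T\ge T_0}E_{\theta^\ast}[|\tilde u^k_T(w)|^q]<\infty$ for each fixed $k$. I would get this from a uniform polynomial tail bound $P_{\theta^\ast}[|\tilde u^k_T(w)|\ge r]\le C r^{-L}$ with $L=\eta\min(L_1,L_2)>q$, via $E_{\theta^\ast}[|\tilde u^k_T|^q]=\int_0^\infty qr^{q-1}P_{\theta^\ast}[|\tilde u^k_T|\ge r]\,dr$ split at a fixed radius $r_1$: the part $r\le r_1$ contributes at most $r_1^q$, and the tail integral converges because $L>q$. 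Throughout, abbreviate $Z=Z^k_T(\cdot\,;\underline{\tilde\theta}_{k-1},\theta^\ast_k,\overline{\theta^\star}_{k+1})$ and $\pi=\pi_k(\theta^\ast_k+a^k_T\,\cdot\,)$, and write $\pi_\ast=\inf_{\theta_k\in\Theta_k}\pi_k>0$, $\pi^\ast=\sup_{\theta_k\in\Theta_k}\pi_k<\infty$.

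The heart is a deterministic comparison exploiting that $\tilde u^k_T$ minimises $\psi^k_T$. Writing $z^\ast:=\tilde u^k_T$, on $\{|z^\ast|\ge r\}$ we have $\psi^k_T(z^\ast)\le\psi^k_T(0)$ (here $0$ is feasible once $T_0$ is large, since $\theta^\ast_k$ is interior), i.e. $\int_{U^k_T}[w_k(u_k-z^\ast)-w_k(u_k)]Z\pi\,du_k\le0$. Splitting the domain at $|u_k|=r^\eta$, using $[C1\mathchar`-\eta]$ to make the central integrand nonnegative and $w_k\ge0$ on the tail, yields the necessary condition
\begin{equation*}
\int_{|u_k|\le r^\eta}[w_k(u_k-z^\ast)-w_k(u_k)]Z\pi\,du_k\le\int_{|u_k|>r^\eta}w_k(u_k)Z\pi\,du_k .
\end{equation*}
I would then bound the two sides. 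For the left side, restrict to a fixed small ball $B_\rho=\{|u_k|\le\rho\}$: by the third defining property of ${\bf W}_{p,k}$ (bounded convex sublevel sets) there is $c_0>0$ with $w_k(v)\ge c_0$ for all large $|v|$, and after shrinking $\rho$ one has $\sup_{B_\rho}w_k\le c_0/2$ by continuity; hence for $|z^\ast|\ge r$ with $r$ large the central integrand is $\ge c_0/2$ on $B_\rho$, so discarding the annulus $\rho<|u_k|\le r^\eta$ by $[C1\mathchar`-\eta]$ gives a lower bound $\tfrac{c_0}{2}\pi_\ast\int_{B_\rho}Z\,du_k$, uniform in the direction of $z^\ast$. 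For the right side, the fourth property gives $w_k(u_k)\le C|u_k|^p$ on $|u_k|\ge1$, so it is at most $C\pi^\ast\int_{|u_k|>r^\eta}|u_k|^pZ\,du_k$.

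Next I would estimate the probabilities. The weighted tail integral is controlled by $[C3\mathchar`-q,\eta]$ through a dyadic decomposition $A_j=\{2^jr^\eta\le|u_k|<2^{j+1}r^\eta\}$: on the event that $\sup_{|u_k|\ge2^jr^\eta}Z\le e^{-2^jr^\eta}$ for all $j$, one has $\int_{|u_k|>r^\eta}|u_k|^pZ\,du_k\le\Delta_r:=C(r^\eta)^{p+d_k}e^{-r^\eta}$, while the complementary probability is at most $\sum_jC(2^jr^\eta)^{-L_2}\le C'r^{-\eta L_2}$. Taking a threshold $\delta_0$ of the order of $\Delta_r$, I split $P_{\theta^\ast}[\mathrm{LHS}\le\mathrm{RHS}]\le P_{\theta^\ast}[\mathrm{RHS}>\delta_0]+P_{\theta^\ast}[\mathrm{LHS}\le\delta_0]$; the first term is $\le C'r^{-\eta L_2}$. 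For the second, $\int_{B_\rho}Z\,du_k\ge\mathrm{vol}(B_\rho)\inf_{B_\rho}Z$, and since $\Delta_r\asymp e^{-r^\eta}$ the level $s$ with $e^{-s}\asymp\Delta_r$ satisfies $s\ge\tfrac12r^\eta$ for large $r$, so $[C2\mathchar`-q,\eta]$ with $R=\rho$ gives $P_{\theta^\ast}[\inf_{B_\rho}Z\le e^{-s}]\le C_\rho s^{-L_1}\le C''r^{-\eta L_1}$. Because $\eta L_1>q$ and $\eta L_2>q$ this produces the required tail bound. Uniformity in $\theta^\ast\in\mathcal K$ and $T\ge T_0$ follows from the uniform constants in $[C2\mathchar`-q,\eta]$ and $[C3\mathchar`-q,\eta]$, from $0<\pi_\ast\le\pi^\ast<\infty$ (this is why the theorem assumes $\inf_{\theta_k\in\Theta_k}\pi_k>0$ rather than only on $\mathcal K$), and from choosing $T_0$ so large that $B_\rho\subset U^k_T(\theta^\ast_k)$ for all $\theta^\ast\in\mathcal K$, which is possible since $a^k_T\to0$ and $\mathcal K$ is a compact subset of the open set $\Theta$.

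The main obstacle is the alignment of scales. The loss split in $[C1\mathchar`-\eta]$ at radius $r^\eta$ and the exponential tail $e^{-r^\eta}$ coming from $[C3\mathchar`-q,\eta]$ force the central lower-bound threshold to be of order $e^{-r^\eta}$, so $[C2\mathchar`-q,\eta]$ must be applied at level $s\asymp r^\eta$; this is exactly where the hypotheses $L_1,L_2>q/\eta$ are consumed, converting the deviation exponents into polynomial decay $r^{-\eta L_i}$ with $\eta L_i>q$. A secondary delicate point is making the central lower bound $\tfrac{c_0}{2}\pi_\ast\int_{B_\rho}Z\,du_k$ genuinely uniform over all far directions $z^\ast$ and over $\theta^\ast\in\mathcal K$, which relies on the centrally-symmetric bounded-sublevel-set structure of $w_k$ together with $w_k(0)=0$ and continuity at $0$.
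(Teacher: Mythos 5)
Your proposal is correct and takes essentially the same approach as the paper: the minimization comparison $\psi^k_T(\tilde u^k_T)\le\psi^k_T(0)$, the split at radius $r^\eta$ via $[C1\mathchar`-\eta]$, the small-ball lower bound coming from the ${\bf W}$-structure of $w_k$ (the paper's inequality (\ref{wk-large-est})), the control of the tail integral by $[C3\mathchar`-q,\eta]$ and of the central infimum of $Z^k_T$ by $[C2\mathchar`-q,\eta]$ at exponential levels of order $e^{-r^\eta}$, and finally integration of the resulting polynomial tail bound whose exponent exceeds $q$. The only cosmetic differences are that you make the dyadic decomposition and the threshold $\delta_0\asymp\Delta_r$ explicit, where the paper compresses these into the thresholds $e^{-r^\eta/2}$ and $e^{-r^\eta/3}$ and the constants $\delta_1,\delta_2,\delta_3$.
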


Let $\hat{u}(\theta^{\ast})$ be a random variable on another statistical experiment $(\tilde{\mathcal{X}},\tilde{\mathcal{A}},\{\tilde{P}\}_{\theta\in\Theta})$,
and $\tilde{E}_{\theta}$ represent the expectation with respect to $\tilde{P}_{\theta}$.
\begin{corollary}\label{main-cor}
Assume that $\mathcal{L}(\hat{u}_T|P_{\theta^{\ast}})\to \mathcal{L}(\hat{u}(\theta^{\ast})|\tilde{P}_{\theta^{\ast}})$ as $T\to\infty$ uniformly in $\theta^{\ast}\in \mathcal{K}$. 
\begin{enumerate}
\item Let $p>0$ and $w\in{\bf W}_p$. Assume $[A1\mathchar`-p]$, $[A2]$-$[A7]$. 
Then $\mathcal{L}(\tilde{u}_T(w)|P_{\theta^{\ast}})\to \mathcal{L}(\hat{u}(\theta^{\ast})|\tilde{P}_{\theta^{\ast}})$ as $T\to\infty$ uniformly in $\theta^{\ast}\in \mathcal{K}$. 
\item Let $\eta\in (0,1)$, $q>1$ and $w\in{\bf W}$. Assume $[A2]$-$[A4]$, $[A6]$, $[A7]$, $[C1\mathchar`-\eta],[C2\mathchar`-q,\eta],[C3\mathchar`-q,\eta]$ and $\inf_{\theta_k\in\Theta_k}\pi_k(\theta_k)>0$ for $1\leq k\leq K$.
Then $E_{\theta^{\ast}}[f(\tilde{u}_T(w))]\to \tilde{E}_{\theta^{\ast}}[f(\hat{u})]$ uniformly in $\theta^{\ast} \in \mathcal{K}$ 
for any continuous function $f$ satisfying $\limsup_{|u|\to\infty}|f(u)||u|^{-q}<\infty$.
\end{enumerate}
\end{corollary}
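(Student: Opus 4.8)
The plan is to deduce both parts from the asymptotic equivalence of Theorem \ref{main} and the uniform moment bound of Theorem \ref{main2}, combined with the assumed uniform convergence $\mathcal{L}(\hat{u}_T|P_{\theta^{\ast}})\to\mathcal{L}(\hat{u}(\theta^{\ast})|\tilde{P}_{\theta^{\ast}})$. Throughout I read uniform convergence in law as $\sup_{\theta^{\ast}\in\mathcal{K}}|E_{\theta^{\ast}}[g(\cdot)]-\tilde{E}_{\theta^{\ast}}[g(\hat{u})]|\to 0$ for every bounded Lipschitz $g$, and I will transfer it to continuous $f$ with $|f(u)|\leq C(1+|u|^q)$ (the growth guaranteed by $\limsup_{|u|\to\infty}|f(u)||u|^{-q}<\infty$) by a truncation argument. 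The only genuinely new inputs beyond the two theorems are a Slutsky-type estimate for part 1 and a uniform integrability estimate for part 2.

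For part 1, fix a bounded Lipschitz $g$ with Lipschitz constant $L_g$ and bound $\|g\|_{\infty}$, and write
\[
|E_{\theta^{\ast}}[g(\tilde{u}_T(w))]-\tilde{E}_{\theta^{\ast}}[g(\hat{u})]|\leq E_{\theta^{\ast}}[|g(\tilde{u}_T(w))-g(\hat{u}_T)|]+|E_{\theta^{\ast}}[g(\hat{u}_T)]-\tilde{E}_{\theta^{\ast}}[g(\hat{u})]|.
\]
The second term tends to $0$ uniformly in $\theta^{\ast}\in\mathcal{K}$ by hypothesis. For the first, fix $\delta>0$ and split on $\{|\tilde{u}_T(w)-\hat{u}_T|\leq\delta\}$ and its complement: on the former the integrand is at most $L_g\delta$, while on the complement it is at most $2\|g\|_{\infty}$ and, by Theorem \ref{main}, $\sup_{\theta^{\ast}\in\mathcal{K}}P_{\theta^{\ast}}[|\tilde{u}_T(w)-\hat{u}_T|>\delta]\to 0$. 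Hence $\limsup_{T}\sup_{\theta^{\ast}}E_{\theta^{\ast}}[|g(\tilde{u}_T(w))-g(\hat{u}_T)|]\leq L_g\delta$, and letting $\delta\to0$ gives the claimed uniform convergence in law of $\tilde{u}_T(w)$.

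For part 2, I first verify that Theorem \ref{main} applies under the stated hypotheses, so that part 1 yields the uniform convergence in law of $\tilde{u}_T(w)$ to $\hat{u}(\theta^{\ast})$: condition $[A5]$ follows from $[C1\mathchar`-\eta]$ as remarked after its statement, and $[A1\mathchar`-p]$ (for the $p$ with $w\in{\bf W}_p$) follows from $[C3\mathchar`-q,\eta]$, since $e^{-r}\leq C_1r^{-D_k}$ for all large $r$ gives the event inclusion that turns $[C3\mathchar`-q,\eta]$ into $[A1\mathchar`-p]$ with $L=L_2$ and any $D_k>d_k+p$, the small-$r$ range being trivial. Next, the inequalities in $[C2\mathchar`-q,\eta]$ and $[C3\mathchar`-q,\eta]$ do not involve $q$ and hold with $L_1,L_2>q/\eta$, hence remain valid as $[C2\mathchar`-q',\eta]$ and $[C3\mathchar`-q',\eta]$ for any $q'\in(q,\eta\min(L_1,L_2))$; applying Theorem \ref{main2} at exponent $q'$ produces $\sup_{\theta^{\ast}\in\mathcal{K},T\geq T_0}E_{\theta^{\ast}}[|\tilde{u}_T(w)|^{q'}]<\infty$ with $q'>q$, which makes $\{f(\tilde{u}_T(w))\}$ uniformly integrable over $\theta^{\ast}\in\mathcal{K}$ and $T\geq T_0$ because $|f(u)|\leq C(1+|u|^q)$.

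It then remains to combine the two ingredients by truncation. For a radius $M$ let $f_M$ be the continuous, compactly supported function obtained by multiplying $f$ by a cutoff equal to $1$ on $\{|u|\leq M\}$; part 1 gives $\sup_{\theta^{\ast}}|E_{\theta^{\ast}}[f_M(\tilde{u}_T(w))]-\tilde{E}_{\theta^{\ast}}[f_M(\hat{u})]|\to0$ (a fixed uniformly continuous function is a uniform limit of Lipschitz ones), while the uniform integrability controls both $\sup_{\theta^{\ast},T}E_{\theta^{\ast}}[|(f-f_M)(\tilde{u}_T(w))|]$ and, via Fatou applied to the $L^{q'}$-bound inherited by the limit law, $\sup_{\theta^{\ast}}\tilde{E}_{\theta^{\ast}}[|(f-f_M)(\hat{u})|]$, both tending to $0$ as $M\to\infty$. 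Letting $T\to\infty$ and then $M\to\infty$ gives $E_{\theta^{\ast}}[f(\tilde{u}_T(w))]\to\tilde{E}_{\theta^{\ast}}[f(\hat{u})]$ uniformly in $\theta^{\ast}\in\mathcal{K}$. I expect the main obstacle to be the uniformity in $\theta^{\ast}$, and in particular showing that the limit tail $\tilde{E}_{\theta^{\ast}}[|f(\hat{u})|\mathbf{1}_{\{|\hat{u}|>M\}}]$ is small uniformly in $\theta^{\ast}$; this is precisely where one must pass the uniform $L^{q'}$-bound to the limit uniformly in $\theta^{\ast}$ rather than merely pointwise, using the uniform convergence in law together with lower semicontinuity of $u\mapsto|u|^{q'}$.
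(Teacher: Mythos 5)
Your proof is correct and follows exactly the route the paper intends: Corollary \ref{main-cor} is stated without an explicit proof, as the standard consequence of Theorem \ref{main} (your Slutsky-type argument for part 1) and Theorem \ref{main2} (convergence in law plus uniform integrability and truncation for part 2, with moments passed to the limit law by lower semicontinuity/Fatou). Your one added ingredient---bootstrapping the moment exponent from $q$ to some $q'\in(q,\eta\min(L_1,L_2))$, which is genuinely needed since a uniform $L^q$-bound alone would not give uniform integrability of $f(\tilde{u}_T(w))$ under growth of order $q$---is precisely the slack built into the requirements $L_1,L_2>q/\eta$ of $[C2\mathchar`-q,\eta]$ and $[C3\mathchar`-q,\eta]$, and the same device appears inside the paper's proof of Theorem \ref{main2}, whose tail estimate is of order $r^{-q'}$ with $q'>q$.
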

\begin{discuss}
{\colorr $\hat{u}$のモーメントの有限性は, 分布収束なら空間を代えてa.s.収束にできることと, Fatou's lemmaを使う.}
{\colorg 分布収束ならa.s.収束を示すときの条件は満たされているのか?}
\end{discuss}

\section{An application to ergodic diffusion processes}\label{ergodic-diffusion-section}

We will see an application of our results to statistical models of ergodic diffusion processes.
We consider the setting of Yoshida \cite{yos05}.

Let $(\Omega,\mathcal{F},P)$ be a probability space and ${\bf F}=\{\mathcal{F}_t\}_{t\geq 0}$ be a filtration. 
We consider a $m$-dimensional ${\bf F}$-adapted process $X=\{X_t\}_{t\geq 0}$ satisfying the following stochastic differential equation:
\begin{equation*}
dX_t=a(X_t,\theta^{\ast}_2)dt+b(X_t,\theta^{\ast}_1)dW_t,\quad t\geq 0,
\end{equation*}
where $\{W_t\}_{t\geq 0}$ is an $r$-dimensional ${\bf F}$-standard Wiener process, 
$a:\mathbb{R}^m\times \Theta_2\to\mathbb{R}^m$ and $b:\mathbb{R}^m\times \Theta_1\to\mathbb{R}^m\otimes \mathbb{R}^r$ are Borel functions.
$\theta^{\ast}_1\in\Theta_1$ and $\theta^{\ast}_2\in \Theta_2$ are unknown parameters.
We assume that $\Theta_1\subset \mathbb{R}^{d_1}$ and $\Theta_2\subset \mathbb{R}^{d_2}$ are bounded convex open sets satisfying Sobolev's inequalities, that is,
for $i=1,2$ and any $p>d_i$, there exists $C>0$ such that
\begin{equation*}
\sup_{x\in\Theta_i}|f(x)|\leq C\sum_{k=0,1}\parallel \partial_x^kf(x)\parallel_p, \quad (f\in C^1(\Theta_i)).
\end{equation*}
It is the case if $\Theta_1$ and $\Theta_2$ have Lipschitz boundaries. See Adams \cite{ada}, Adams and Fournier \cite{ada-fou}. 
\begin{discuss}
{\colorg convexなら自動か?}
\end{discuss}
We also assume that $\Theta$ satisfies Sobolev's inequalities. The distribution of $X_0$ may depend on $\theta^{\ast}=(\theta^{\ast}_1,\theta^{\ast}_2)$.

Let $B(x,\theta_1)=bb^{\top}(x,\theta_1)$. We assume the following conditions.
\begin{description}
\item{[$D1$]} 
\begin{enumerate}
\item $E[|X_0|^q]<\infty$ for any $q>0$.
\item $B(x,\theta_1)$ is elliptic uniformly in $(x,\theta_1)$.
\item The derivatives $\partial^i_{\theta_2}a$ and $\partial^j_x\partial^i_{\theta_1}b$ exist and continuous,
and there exists constant $C>0$ such that
\begin{equation*} 
\sup_{\theta_2\in\Theta_2}|\partial^i_{\theta_2}a(x,\theta_2)|\leq  C(1+|x|)^C, \quad
\sup_{\theta_1\in\Theta_1}|\partial^j_x\partial^i_{\theta_1}b(x,\theta_1)|\leq C(1+|x|)^C 
\end{equation*}
for any $x\in\mathbb{R}^m$, $0\leq i\leq 4$ and $0\leq j\leq 2$.
Moreover, $a$ and $b$ can be extended to continuous functions on $\mathbb{R}^m\times {\rm clos}(\Theta_2)$ and $\mathbb{R}^m\times {\rm clos}(\Theta_1)$, respectively.
\item There exists a constant $C>0$ such that
\begin{equation*}
\sup_{\theta_2\in\Theta_2}|a(x_1,\theta_2)-a(x_2,\theta_2)|+\sup_{\theta_1\in\Theta_1}|b(x_1,\theta_1)-b(x_2,\theta_1)|\leq C|x_1-x_2|
\end{equation*}
for $x_1,x_2\in\mathbb{R}^m$.
\end{enumerate}
\item {[$D2$]} There exists a positive constant $c$ such that
\begin{equation*}
\sup_{t\geq 0}\sup_{A\in\sigma(X_s;s\leq t),B\in\sigma(X_s;s\geq t+h)}|P[A\cap B]-P[A]P[B]|\leq c^{-1}\exp(-ch) \quad (h>0).
\end{equation*}
\end{description}
Condition $[D2]$ implies ergodicity of $X$: there exists an invariant measure $\nu$ for $X_t$ such that
\begin{equation*}
\frac{1}{T}\int^T_0g(X_t)dt\to \int_{\mathbb{R}^m}g(x)\nu(dx)
\end{equation*}
as $T\to\infty$ for any bounded measurable function $g$. 
\begin{discuss}
{\colorg
For the mixing properties $[D2]$ of diffusion processes, we refer the reader to Kusuoka and Yoshida \cite{kus-yos}}. {\colorg Kusuoka Yoshidaで十分か?} 
\end{discuss}

We consider estimation of the parameter $\theta^{\ast}=(\theta^{\ast}_1,\theta^{\ast}_2)$ by discrete observations $\{X_{ih_n}\}_{i=0}^n$ of $X$,
where $h_n$ is a positive number satisfying $h_n\to 0$, $nh_n\to \infty$ and $nh^2_n\to 0$ as $n\to \infty$.
Moreover, we assume that there exists a constant $\epsilon_0>0$ such that $nh\geq n^{\epsilon_0}$ for sufficiently large $n$.

Yoshida \cite{yos05} considered a quasi-likelihood function $H_n(\theta)$ defined by
\begin{eqnarray}
H_n(\theta)=-\frac{1}{2}\sum_{i=1}^n\bigg\{\frac{B(X_{(i-1)h_n},\theta_1)^{-1}}{h}[(X_{ih_n}-X_{(i-1)h_n}-ha(X_{(i-1)h_n},\theta_2))^{\otimes 2}]+\log \det B(X_{(i-1)h_n},\theta_1)\bigg\}.
\end{eqnarray}

The quasi-maximum likelihood estimator $\hat{\theta}_n$ is defined as a random variable satisfying $\hat{\theta}_n=(\hat{\theta}^1_n,\hat{\theta}^2_n)={\rm argmax}_{\theta\in{\rm clos}(\Theta)}H_n(\theta)$.
Let $\theta^{\star}_2\in \Theta_2$, $w=(w_1,w_2)\in{\bf W}$, a prior density function $\pi=(\pi_1,\pi_2):\Theta\to (0,\infty)$ be continuous and bounded. 
Then the adaptive Bayes type estimator $\tilde{\theta}_n=(\tilde{\theta}^1_n,\tilde{\theta}^2_n)$ is an random variable satisfying
\begin{eqnarray}
\tilde{\theta}^1_n&=&{\rm argmin}_{z_1}\int_{\Theta_1} w_1(\sqrt{n}(z_1-\theta_1))\exp(H_n(\theta_1,\theta^{\star}_2))\pi_1(\theta_1)d\theta_1, \nonumber \\
\tilde{\theta}^2_n&=&{\rm argmin}_{z_2}\int_{\Theta_2} w_2(\sqrt{nh_n}(z_2-\theta_2))\exp(H_n(\tilde{\theta}^1_n,\theta_2))\pi_2(\theta_2)d\theta_2. \nonumber 
\end{eqnarray}

Let 
\begin{eqnarray}
Y^1(\theta_1)&=&-\frac{1}{2}\int_{\mathbb{R}^m}\bigg\{{\rm tr}\left(B(x,\theta_1)^{-1}B(x,\theta^{\ast}_1)-I_m\right)+\log\frac{\det B(x,\theta_1)}{\det B(x,\theta^{\ast}_1)}\bigg\}\nu(dx), \nonumber \\
Y^2(\theta_2)&=&-\frac{1}{2}\int_{\mathbb{R}^m}B(x,\theta^{\ast}_1)^{-1}[(a(x,\theta_2)-a(x,\theta^{\ast}_2))^{\otimes 2}]\nu(dx). \nonumber \\
\Gamma^1&=&\frac{1}{2}\int_{\mathbb{R}^m} {\rm tr}\left\{B^{-1}(\partial_{\theta_1}B)B^{-1}(\partial_{\theta_1}B)(x,\theta^{\ast}_1)\right)\nu(dx), \nonumber \\
\Gamma^2&=&\int_{\mathbb{R}^m} (\partial_{\theta_2}a(x,\theta^{\ast}_2))^{\top}B(x,\theta^{\ast}_1)^{-1}\partial_{\theta_2}a(x,\theta^{\ast}_2)\nu(dx), \nonumber
\end{eqnarray}
where $I_m$ represents the unit matrix of size $m$.
We assume some more conditions.
\begin{description}
\item{[$D3$]} There exists a positive constant $\chi_1$ such that $Y^1(\theta_1)\leq -\chi_1|\theta_1-\theta^{\ast}_1|^2$ for any $\theta_1\in\Theta_1$. 
\item{[$D4$]} There exists a positive constant $\chi_2$ such that $Y^2(\theta_2)\leq -\chi_2|\theta_2-\theta^{\ast}_2|^2$ for any $\theta_2\in\Theta_2$. 
\item{[$D5$]} An adaptive Bayes type estimator $\tilde{\theta}_n$ exists a.s. for sufficiently large $n$ and 
there exist constants $r_0>0$ and $\eta \in (0,1)$ such that the loss function $w_k$ satisfies
\begin{equation*}
\inf_{r\geq r_0}\inf_{|u_k|\leq r^{\eta},|z|\geq r}(w_k(u_k-z)-w_k(u_k))\geq 0
\end{equation*}
for $1\leq k\leq 2$.
\end{description}

Let $(\zeta_1,\zeta_2)$ is a zero-mean normal random variable with variance ${\rm diag}((\Gamma^1)^{-1},(\Gamma^2)^{-1})$.
\begin{theorem}\label{ergodic-diffusion-theorem}
Assume $[D1]$-$[D5]$ and that $\inf_{\theta_1\in\Theta_1}\pi_1(\theta_1)\wedge \inf_{\theta_2\in\Theta_2}\pi_2(\theta_2)>0$. Then 
\begin{equation*}
(\sqrt{n}(\tilde{\theta}^1_n-\theta^{\ast}_1),\sqrt{nh_n}(\tilde{\theta}^2_n-\theta^{\ast}_2))\to^d (\zeta_1,\zeta_2)
\end{equation*}
as $n\to \infty$. Moreover, 
\begin{equation*}
E[f(\sqrt{n}(\tilde{\theta}^1_n-\theta^{\ast}_1),\sqrt{nh_n}(\tilde{\theta}^2_n-\theta^{\ast}_2))]\to E[f(\zeta_1,\zeta_2)]
\end{equation*}
as $n\to \infty$ for any continuous function $f$ of at most polynomial growth.
\end{theorem}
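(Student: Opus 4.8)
The plan is to obtain Theorem \ref{ergodic-diffusion-theorem} as a direct application of Corollary \ref{main-cor}, under the identification $K=2$, $T=n$, $H_T=H_n$, $a^1_n=n^{-1/2}I_{d_1}$, $a^2_n=(nh_n)^{-1/2}I_{d_2}$ (so that $b^1_n=n$ and $b^2_n=nh_n$), and $\mathcal{K}=\{\theta^{\ast}\}$. With these choices the rescaled errors in the statement are exactly $\hat{u}_n=(\sqrt{n}(\hat{\theta}^1_n-\theta^{\ast}_1),\sqrt{nh_n}(\hat{\theta}^2_n-\theta^{\ast}_2))$ and $\tilde{u}_n(w)$, and the deterministic matrices $\Gamma^1,\Gamma^2$ defined before the theorem are the limits appearing in $[A2]$. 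The whole argument therefore reduces to recalling the known asymptotics of the quasi-maximum likelihood estimator $\hat{\theta}_n$ and to verifying that $[A1\mathchar`-p]$, $[A2]$-$[A7]$, and $[C1\mathchar`-\eta]$, $[C2\mathchar`-q,\eta]$, $[C3\mathchar`-q,\eta]$ hold for this model.

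First I would invoke Yoshida \cite{yos05}: under $[D1]$-$[D4]$ the quasi-likelihood $H_n$ satisfies the polynomial type large deviation inequalities, which furnish $[A1\mathchar`-p]$ for every $p>0$ and, together with the nondegeneracy supplied by $[D3]$ and $[D4]$, yield asymptotic normality and convergence of moments of the M-estimator, i.e. $\hat{u}_n\to^d(\zeta_1,\zeta_2)$ under $P_{\theta^{\ast}}$. This simultaneously provides the limit $\hat{u}(\theta^{\ast})=(\zeta_1,\zeta_2)$ required in the overarching hypothesis of Corollary \ref{main-cor} (trivially uniform since $\mathcal{K}$ is a single point) and verifies the tightness $[A6]$. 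Because we work at $\mathcal{K}=\{\theta^{\ast}\}$, condition $[A4]$ reduces to its first alternative, positive definiteness of the deterministic $\Gamma^k$, which follows from $[D3]$ and $[D4]$: the bounds $Y^k(\theta_k)\leq -\chi_k|\theta_k-\theta^{\ast}_k|^2$ force $\Gamma^k\geq 2\chi_k I>0$.

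Next I would verify the remaining conditions directly. For $[A2]$, the tightness of $(b^k_n)^{-1}\sup|\partial^3_{\theta_k}H_n|$ and the convergence $\Gamma^k_n\to\Gamma^k$ follow from the ergodic law of large numbers recorded after $[D2]$, applied to the polynomial-growth integrands obtained by differentiating $H_n$ under the growth and moment bounds of $[D1]$; the limits $\Gamma^k$ being deterministic also makes the uniform-tightness clauses automatic. Condition $[A3]$, the asymptotic orthogonality of the two parameter blocks, is the adaptivity statement and, as remarked after its definition, is checked by a direct computation: the mixed derivative $\partial_{\theta_1}\partial_{\theta_2}H_n$ has a martingale part of order $O_p(\sqrt{nh_n})$ and a bias part of order $O(nh_n^2)$, so after scaling by $a^1_n=n^{-1/2}$ it is $O_p(\sqrt{h_n})+O(\sqrt{n}\,h_n^2)=o_p(1)$ precisely because $h_n\to 0$ and $nh_n^2\to 0$. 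Conditions $[A5]$ and $[C1\mathchar`-\eta]$ come from $[D5]$ (the excerpt already records that $[C1\mathchar`-\eta]$ implies $[A5]$), $[A7]$ is the existence clause of $[D5]$, and the prior positivity required by the corollary is exactly the hypothesis $\inf\pi_1\wedge\inf\pi_2>0$. Finally, the moment-type large deviation bounds $[C2\mathchar`-q,\eta]$ and $[C3\mathchar`-q,\eta]$ hold with arbitrarily large exponents $L_1,L_2$ by the same scheme of Yoshida \cite{yos05}, using that $[D1]$ grants all polynomial moments of $X_0$, hence of the functionals of $X$ entering $H_n$.

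With every hypothesis in place the conclusion follows in two strokes. Choosing the $p>0$ for which $w\in{\bf W}_p$ and applying the first part of Corollary \ref{main-cor} gives $\tilde{u}_n(w)\to^d(\zeta_1,\zeta_2)$, which is precisely the first assertion. For the second, given a continuous $f$ with $|f(u)|\leq C(1+|u|^m)$, I would fix $q>m$ and take $\eta\in(0,1)$ as in $[D5]$, so that $\limsup_{|u|\to\infty}|f(u)||u|^{-q}<\infty$; the second part of Corollary \ref{main-cor} then yields $E[f(\tilde{u}_n(w))]\to E[f(\zeta_1,\zeta_2)]$. The main obstacle is not any single step but the bookkeeping needed to confirm that Yoshida's large deviation machinery delivers $[C2\mathchar`-q,\eta]$ and $[C3\mathchar`-q,\eta]$ with $L_1,L_2$ large enough for every $q$ simultaneously, so that moments of all orders converge, together with the direct verification of the adaptivity condition $[A3]$ under the stated rate conditions on $h_n$.
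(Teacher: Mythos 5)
Your proposal is correct and follows essentially the same route as the paper: identify the model with the general framework ($K=2$, $a^1_n=n^{-1/2}I_{d_1}$, $a^2_n=(nh_n)^{-1/2}I_{d_2}$, $\mathcal{K}=\{\theta^{\ast}\}$), import the asymptotic normality of the quasi-maximum likelihood estimator and the polynomial type large deviation machinery from Yoshida \cite{yos05}, and verify $[A2]$--$[A7]$ and $[C1\mathchar`-\eta]$, $[C2\mathchar`-q,\eta]$, $[C3\mathchar`-q,\eta]$, with $[A3]$ and $[C2\mathchar`-q,\eta]$ checked by the same martingale-plus-bias and Taylor-expansion computations. The only differences are cosmetic: the paper invokes only part 2 of Corollary \ref{main-cor} (weak convergence then follows by taking bounded continuous $f$, so $[A1\mathchar`-p]$ and $[A5]$ need not be verified separately), whereas you also invoke part 1; and in your $[A3]$ computation the bias term is $O(nh_n)$ uniformly in $\theta_2$ (the drift mismatch $a(x,\theta^{\ast}_2)-a(x,\theta_2)$ is $O(1)$ off the true parameter), not $O(nh_n^2)$ --- which still vanishes after the $n^{-1/2}$ scaling since $\sqrt{n}h_n=(nh_n^2)^{1/2}\to 0$.
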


\section{Proofs}\label{proofs-section}

We will prove Theorems \ref{main}, \ref{main2} and \ref{ergodic-diffusion-theorem}. We apply the idea of the proof of Theorem 8.2. in Ibragimov and Has'minskii \cite{ibr-has03}.
First, we prepare some lemmas.
\begin{lemma}\label{psi-large-est-lemma}
Let $p>0$, $w\in{\bf W}_p$ and $1\leq k\leq K$. Assume $[A1\mathchar`-p]$, $[A2]$, $[A3]$ and $[A5]$-$[A7]$. 
Then $\{\tilde{u}^k_T(w)\}_{T\geq T_0}$ is $P_{\theta^{\ast}}$-tight uniformly in $\theta^{\ast}\in\mathcal{K}$. 
\end{lemma}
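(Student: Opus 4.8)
The plan is to follow the Ibragimov--Has'minskii strategy adapted to the polynomial large deviation setting. Since $\tilde{u}^k_T(w)$ minimizes $\psi^k_T$, to bound $P_{\theta^{\ast}}[|\tilde{u}^k_T(w)|\geq R]$ it suffices to exhibit, on an event whose complement has probability close to $0$ uniformly in $\theta^{\ast}\in\mathcal{K}$ and $T\geq T_0$, a fixed reference point at which $\psi^k_T$ is strictly smaller than $\psi^k_T(z)$ for every $z$ with $|z|\geq R$; then no such $z$ can be the minimizer. The natural reference is a maximizer $u^{\circ}_T$ of $u_k\mapsto Z^k_T(u_k;\underline{\tilde{\theta}}_{k-1},\theta^{\ast}_k,\overline{\theta^{\star}}_{k+1})$ over ${\rm clos}(U^k_T(\theta^{\ast}_k))$. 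Because $Z^k_T(0;\cdots)=1$ we have $Z^k_T(u^{\circ}_T;\cdots)\geq 1$, so $\{|u^{\circ}_T|\geq r\}\subset\{\sup_{u_k\in V^k_T(r,\theta^{\ast}_k)}Z^k_T\geq 1\}$, and for $r\geq C_1^{1/D_k}$ condition $[A1\mathchar`-p]$ gives $\sup_{\theta^{\ast}\in\mathcal{K},T\geq T_0}P_{\theta^{\ast}}[|u^{\circ}_T|\geq r]\leq C_2/r^L$. Hence $u^{\circ}_T$ is tight uniformly, and I may work on $\{|u^{\circ}_T|\leq M_0\}$ (condition $[A6]$ offers an alternative source of a tight centering point).

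Next I would estimate $\psi^k_T(z)-\psi^k_T(u^{\circ}_T)=\int_{U^k_T}(w_k(u_k-z)-w_k(u_k-u^{\circ}_T))Z^k_T\pi_k\,du_k$ for $|z|\geq R$ by splitting $U^k_T$ according to $|u_k-u^{\circ}_T|\leq\delta$, $\delta<|u_k-u^{\circ}_T|\leq\rho$, and $|u_k-u^{\circ}_T|>\rho$. Since $w_k\not\equiv 0$ there is $v^{\bullet}$ with $w_k(v^{\bullet})>0$, and $[A5]$ with $M=|v^{\bullet}|$ yields $M'$ and a constant $2c_0:=w_k(v^{\bullet})$ with $\inf_{|v|\geq M'}w_k\geq\sup_{|v|\leq|v^{\bullet}|}w_k\geq 2c_0$. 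On the innermost region $w_k(u_k-u^{\circ}_T)\leq\sup_{|v|\leq\delta}w_k=:\epsilon(\delta)$ while $|u_k-z|\geq R-M_0-\delta$, so for $R$ large $w_k(u_k-z)\geq\inf_{|v|\geq R-M_0-\delta}w_k\geq 2c_0$ and the integrand is $\geq 2c_0-\epsilon(\delta)\geq c_0$ once $\delta$ is small. On the middle region $w_k(u_k-u^{\circ}_T)\leq\sup_{|v|\leq\rho}w_k$, and applying $[A5]$ with $M=\rho$ shows $w_k(u_k-z)\geq\inf_{|v|\geq R-M_0-\rho}w_k\geq\sup_{|v|\leq\rho}w_k$ for $R$ large, so the integrand is $\geq 0$. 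On the outer region I bound $w_k(u_k-z)\geq 0$ and $w_k(u_k-u^{\circ}_T)\leq C(1+|u_k-u^{\circ}_T|^p)$ by property $4$ of ${\bf W}_{p,k}$, giving a contribution at least $-C\int_{|u_k-u^{\circ}_T|>\rho}(1+|u_k-u^{\circ}_T|^p)Z^k_T\pi_k\,du_k$.

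The outer integral is controlled again by $[A1\mathchar`-p]$: covering $\{|u_k|\geq\rho\}$ by dyadic shells and using the inequality on each, together with $D_k>d_k+p$ and $L>1$, shows that for any $\epsilon'>0$ there is $\rho$ with $\sup_{\theta^{\ast}\in\mathcal{K},T\geq T_0}P_{\theta^{\ast}}[\int_{|u_k|>\rho}(1+|u_k|^p)Z^k_T\pi_k\,du_k>\epsilon']<\epsilon'$, the bounded shift by $u^{\circ}_T$ being absorbed. Combining the three regions gives $\psi^k_T(z)-\psi^k_T(u^{\circ}_T)\geq c_0\int_{|u_k-u^{\circ}_T|\leq\delta}Z^k_T\pi_k\,du_k-\epsilon'$ simultaneously for all $|z|\geq R$. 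It remains to bound the innermost mass from below: I would Taylor-expand $\log Z^k_T$ about $u^{\circ}_T$; on $\{|u^{\circ}_T|\leq M_0\}$ the point $\theta^{\ast}_k+a^k_Tu^{\circ}_T$ lies in the interior of $\Theta_k$ with high probability (since the operator norm of $a^k_T$ is at most $\sqrt{C_1}(b^k_T)^{-1/2}\to 0$ and $\mathcal{K}$ is compact in the open set $\Theta$), so the first-order term vanishes, while the tightness of $\Gamma^k_T$ and of $(b^k_T)^{-1}\sup|\partial^3_{\theta_k}H_T|$ from $[A2]$ bound the quadratic and cubic remainders, yielding $Z^k_T\geq 1/2$ on a ball of deterministic radius $\delta$; with $\inf_{\mathcal{K}}\pi_k>0$ this gives $\int_{|u_k-u^{\circ}_T|\leq\delta}Z^k_T\pi_k\,du_k\geq c_1>0$ uniformly. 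Taking $\epsilon'<c_0c_1/2$ forces $\psi^k_T(z)>\psi^k_T(u^{\circ}_T)$ for all $|z|\geq R$, contradicting minimality, so $|\tilde{u}^k_T|<R$ on the good event; letting $R\to\infty$ drives the exceptional probability to $0$ uniformly.

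I expect the main obstacle to be this uniform lower bound $\int_{|u_k-u^{\circ}_T|\leq\delta}Z^k_T\pi_k\,du_k\geq c_1$: one must rule out, uniformly in $\theta^{\ast}$ and $T$, that the maximizer sits on $\partial\Theta_k$ (where the first-order term need not vanish), and must extract a radius $\delta$ that is deterministic on the relevant high-probability event using only the upper Hessian and third-derivative control of $[A2]$ (note that $[A4]$, hence nondegeneracy of $\Gamma^k$, is not among the hypotheses). The dyadic-shell bound for the outer tail, though routine, also hinges delicately on the exponents $D_k>d_k+p$ and $L>1$ in $[A1\mathchar`-p]$.
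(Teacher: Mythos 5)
Your proposal is correct, but it takes a genuinely different route from the paper at the key step. Both arguments share the same skeleton: compare $\psi^k_T(z)$ for $|z|\geq R$ against a reference point, make the integrand favorable on a bounded region via $[A5]$ and the properties of ${\bf W}_{p,k}$, and kill the tail $\int_{\{|u_k|>\rho\}}w_k Z^k_T\pi_k\,du_k$ by $[A1\mathchar`-p]$ with $D_k>d_k+p$. The difference is how the random field is bounded below near the reference point. The paper takes $z=0$ as reference and must control the linear term $\partial_{\theta_k}H_T(\underline{\tilde{\theta}}_{k-1},\theta^{\ast}_k,\overline{\theta^{\star}}_{k+1})[a^k_Tu_k]$ in the Taylor expansion of $\log Z^k_T$; it does so by subtracting the vanishing gradient at the M-estimator $\hat{\theta}$ and integrating second derivatives along a path that changes one block of coordinates at a time (equation (\ref{partialH-est})). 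This is precisely what forces the paper to assume $[A6]$, to invoke $[A3]$ for the cross-derivative blocks, and to run an induction on $k$ (tightness of $\tilde{u}^l_T$ for $l<k$ enters through the terms $\tilde{\theta}^l_T-\hat{\theta}^l_T$), yielding the two-sided bound $|\log Z^k_T|\leq R$ on $\{|u_k|\leq M\}$ with high probability. You instead center at the maximizer $u^{\circ}_T$ of $Z^k_T$ itself: its uniform tightness is free from $[A1\mathchar`-p]$ since $Z^k_T(u^{\circ}_T)\geq Z^k_T(0)=1$, and because the other coordinate blocks are frozen in the definition of $Z^k_T$, the first-order term at an interior $u^{\circ}_T$ vanishes identically, so only the Hessian/third-derivative tightness of $[A2]$ is needed for the local bound $Z^k_T\geq 1/2$ (note, as you correctly observe, only an \emph{upper} bound on $\Gamma^k_T$ is needed, so the absence of $[A4]$ is harmless, and your boundary concern is settled exactly as you sketch, by compactness of $\mathcal{K}$ and $\lambda_{\max}((a^k_T)^{\top}a^k_T)\leq C_1(b^k_T)^{-1}\to 0$). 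What each approach buys: yours is leaner, using only $[A1\mathchar`-p]$, $[A2]$, $[A5]$, $[A7]$ and dispensing with $[A3]$, $[A6]$ and the induction entirely; the paper's detour through $\hat{\theta}$ is not wasted, however, since the expansions (\ref{logZ-exp})--(\ref{partialH-est}) are reused verbatim in Lemma \ref{logZ-lim-lemma} and in the proof of Theorem \ref{main} to establish the asymptotic equivalence $\tilde{u}_T-\hat{u}_T\to 0$, which is the paper's actual goal. The only details you should still nail down are routine: a measurable selection for $u^{\circ}_T$, and the small continuity argument showing $\{|u^{\circ}_T|>r\}\subset\{\sup_{V^k_T(r,\theta^{\ast}_k)}Z^k_T\geq 1\}$ when the maximizer lies on the boundary of $U^k_T(\theta^{\ast}_k)$.
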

\begin{proof}
We will prove by induction on $k$. Assume $\{\tilde{u}^l_T(w)\}_{T\geq T_0}$ is $P_{\theta^{\ast}}$-tight uniformly in $\theta^{\ast}\in\mathcal{K}$ for $1\leq l\leq k-1$. 
Fix $\epsilon>0$. Taylor's formula yields 
\begin{eqnarray}\label{logZ-exp}
\log(Z^k_T(u_k;\underline{\tilde{\theta}}_{k-1},\theta^{\ast}_k,\overline{\theta^{\star}}_{k+1}))
&=&\partial_{\theta_k}H_T(\underline{\tilde{\theta}}_{k-1},\theta^{\ast}_k,\overline{\theta^{\star}}_{k+1})[a^k_Tu_k]
+\partial^2_{\theta_k}H_T(\underline{\tilde{\theta}}_{k-1},\theta^{\ast}_k,\overline{\theta^{\star}}_{k+1})[(a^k_Tu_k)^{\otimes 2}]/2 \nonumber \\
&&\int^1_0\frac{(1-t)^2}{2}\partial^3_{\theta_k}H_T(\underline{\tilde{\theta}}_{k-1},\theta^{\ast}_k+ta^k_Tu_k,\overline{\theta^{\star}}_{k+1})[(a^k_Tu_k)^{\otimes 3}]dt, 
\end{eqnarray}
if $\{\theta^{\ast}_k+ta^k_Tu_k\}_{0\leq t\leq 1}\subset \Theta_k$.

Moreover, we have
\begin{eqnarray}\label{partialH-est}
&&\partial_{\theta_k}H_T(\underline{\tilde{\theta}}_{k-1},\theta^{\ast}_k,\overline{\theta^{\star}}_{k+1})[a^k_Tu_k] \nonumber \\
&=&\partial_{\theta_k}H_T(\underline{\tilde{\theta}}_{k-1},\theta^{\ast}_k,\overline{\theta^{\star}}_{k+1})[a^k_Tu_k] -\partial_{\theta_k}H_T(\hat{\theta})[a^k_Tu_k] \nonumber \\
&=&-\int^1_0\partial^2_{\theta_k}H_T(\underline{\tilde{\theta}}_{k-1},\theta^{\ast}_k+t(\hat{\theta}_k-\theta^{\ast}_k),\overline{\theta^{\star}}_{k+1})[a^k_Tu_k,\hat{\theta}^k_T-\theta^{\ast}_k]dt \nonumber \\
&&+\int^1_0\partial_{\overline{\theta}_{k+1}}\partial_{\theta_k}H_T(\underline{\tilde{\theta}}_{k-1},\hat{\theta}^k_T,\overline{\hat{\theta}}_{k+1}+t(\overline{\theta^{\star}}_{k+1}-\overline{\hat{\theta}}_{k+1}))[a^k_Tu_k,\overline{\theta^{\star}}_{k+1}-\overline{\hat{\theta}}_{k+1}]dt, \nonumber \\
&&+\sum_{l;l<k}\int^1_0\partial_{\theta_l}\partial_{\theta_k}H_T(\underline{\tilde{\theta}}_{l-1},\hat{\theta}^l_T+t(\tilde{\theta}^l_T-\hat{\theta}^l_T),\overline{\hat{\theta}}_{l+1})[a^k_Tu_k,\tilde{\theta}^l_T-\hat{\theta}^l_T]dt,
\end{eqnarray}
if $\hat{\theta}\in\Theta$ and $\{t\theta^{\ast}_k+(1-t)\hat{\theta}^k_T\}_{0\leq t\leq 1}\subset \Theta_k$.

Hence for any $M>0$, there exists $R>0$ such that $\sup_{T\geq T_0}P[\sup_{|u_k|\leq M}|\log(Z^k_T(u_k;\underline{\tilde{\theta}}_{k-1},\theta^{\ast}_k,\overline{\theta^{\star}}_{k+1}))|>R]<\epsilon/2$, 
by $[A2]$, $[A3]$, $[A6]$ and the induction assumption.

Moreover, $w_k\in{\bf W}_{p,k}$ implies that there exist positive constants $\{\delta_j\}_{j=1}^3$ such that
\begin{equation}\label{wk-large-est}
\inf_{|z|>\delta_3}\int_{|u_k|\leq \delta_1}(w_k(u_k-z)-w_k(u_k))du_k>\delta_2.
\end{equation}
\begin{discuss}
{\colorr
${\bf W}_{p,k}$の条件3から, ある$M>0$と$c>0$があって, $w_k(u_k)>c$ if $|u_k|\geq M$.
一方, 条件1からある$\delta_1$があって, $w_k(u_k)<c/2$ on $|u_k|\leq \delta_1$.
よって$\delta_3=M+\delta_1$, $\delta_2=c/2$ととればよい.
}
\end{discuss}

Furthermore, by the virtue of $[A1\mathchar`-p]$, there exists $M>\delta_1\vee \delta_3$ such that 
\begin{equation*}
\sup_{\theta^{\ast}\in\mathcal{K},T\geq T_0}P_{\theta^{\ast}}\bigg[\int_{\{|u_k|> M\}\cap U^k_T(\theta^{\ast})}w_k(u_k)Z^k_T(u_k;\underline{\tilde{\theta}}_{k-1},\theta^{\ast}_k,\overline{\theta^{\star}}_{k+1})\pi_k(\theta^{\ast}_k+a^k_Tu_k)du_k>Q]<\epsilon/2,
\end{equation*}
where $Q=\delta_2e^{-R}\inf_{\theta_k^{\ast}\in\mathcal{K}}\pi_k(\theta^{\ast}_k)/2$.
\begin{discuss}
{\colorr $\sup \pi$の評価ここで使う.}
\end{discuss}

On the other hand, there exists $M'>M$ such that (\ref{far-wk-est}) holds true.
Hence we obtain
\begin{eqnarray}
&&\sup_{\theta^{\ast}\in\mathcal{K},T\geq T_0}P_{\theta^{\ast}}[|\tilde{u}^k_T|\geq M'+M] \nonumber \\
&\leq &\sup_{\theta^{\ast}\in\mathcal{K},T\geq T_0}P_{\theta^{\ast}}\bigg[\inf_{|z|>M'+M}\int_{\{|u_k|\leq M\}\cap U^k_T(\theta^{\ast})}w_k(u_k-z)Z^k_T(u_k;\underline{\tilde{\theta}}_{k-1},\theta^{\ast}_k,\overline{\theta^{\star}}_{k+1})\pi_k(\theta^{\ast}_k+a^k_Tu_k)du_k \nonumber \\
&&\quad \quad \quad \leq \int_{\{|u_k|\leq M\}\cap U^k_T(\theta^{\ast})}w_k(u_k)Z^k_T(u_k;\underline{\tilde{\theta}}_{k-1},\theta^{\ast}_k,\overline{\theta^{\star}}_{k+1})\pi_k(\theta^{\ast}_k+a^k_Tu_k)du_k+Q\bigg]+\frac{\epsilon}{2} \nonumber \\
&\leq & \sup_{\theta^{\ast}\in\mathcal{K},T\geq T_0}P_{\theta^{\ast}}\bigg[\bigg(\frac{1}{2}\inf_{\theta^{\ast}\in\mathcal{K}}\pi_k(\theta_k^{\ast})\bigg)e^{-R}\inf_{|z|>M'+M}\int_{|u_k|\leq M}(w_k(u_k-z)-w_k(u_k))du_k\leq Q\bigg]+\epsilon=\epsilon. \nonumber 
\end{eqnarray}
\begin{discuss}
{\colorr
$T_0$を十分大きくとれば, $\theta^{\ast}_k$に依れば, $|\pi_k(\theta^{\ast}_k+a^k_Tu_k)-\pi_k(\theta^{\ast}_k)|<\inf_{\theta^{\ast}\in\mathcal{K}}\pi_k(\theta^{\ast}_k)/2$ととれる.
}
\end{discuss}
\end{proof}

\begin{lemma}\label{logZ-lim-lemma}
Let $p>0$, $w\in{\bf W}_p$ and $1\leq k\leq K$. Assume $[A1\mathchar`-p]$,$[A2]$,$[A3]$ and $[A5]$-$[A7]$. Then
\begin{equation*}
\sup_{|u_k|\leq R}\bigg|\log Z^k_T(u_k;\underline{\tilde{\theta}(w)}_{k-1},\theta^{\ast}_k,\overline{\theta^{\star}}_{k+1})
-\frac{1}{2}\Gamma^k(\overline{\theta^{\star}}_{k+1},\theta^{\ast})[\hat{u}^k_T,\hat{u}^k_T]
+\frac{1}{2}\Gamma^k(\overline{\theta^{\star}}_{k+1},\theta^{\ast})[u_k-\hat{u}^k_T,u_k-\hat{u}^k_T]\bigg|\to 0
\end{equation*}
as $T\to\infty$ in $P_{\theta^{\ast}}$-probability uniformly in $\theta^{\ast}\in\mathcal{K}$ for any $R>0$.
\end{lemma}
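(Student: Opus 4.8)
The plan is to combine the two expansions already obtained in the proof of Lemma \ref{psi-large-est-lemma}: the second-order Taylor formula $(\ref{logZ-exp})$ for $\log Z^k_T$ and the representation $(\ref{partialH-est})$ of its gradient. Writing $\Gamma^k=\Gamma^k(\overline{\theta^{\star}}_{k+1},\theta^{\ast})$ for brevity, the idea is that on $\{|u_k|\leq R\}$ every term in these expansions is either uniformly negligible or assembles into $\Gamma^k_T(\theta^{\ast}_k,\overline{\theta^{\star}}_{k+1})[u_k,\hat{u}^k_T]-\frac12\Gamma^k_T(\theta^{\ast}_k,\overline{\theta^{\star}}_{k+1})[u_k,u_k]$; after replacing $\Gamma^k_T$ by $\Gamma^k$, the completion of the square
\[
\Gamma^k[u_k,\hat{u}^k_T]-\tfrac12\Gamma^k[u_k,u_k]=\tfrac12\Gamma^k[\hat{u}^k_T,\hat{u}^k_T]-\tfrac12\Gamma^k[u_k-\hat{u}^k_T,u_k-\hat{u}^k_T]
\]
gives exactly the asserted limit. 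The Hessian term in $(\ref{logZ-exp})$ already equals $-\frac12\Gamma^k_T(\theta^{\ast}_k,\overline{\theta^{\star}}_{k+1})[u_k,u_k]$ by the definition of $\Gamma^k_T$, so the work is to identify the gradient contribution and to discard remainders.

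First I would check that the expansions are valid and that the third-order remainder of $(\ref{logZ-exp})$ is uniformly negligible. Since $\mathcal{K}$ is compact in $\Theta$, each $\theta^{\ast}_k$ sits at distance at least some $\rho>0$ from $\partial\Theta_k$, and since $\lambda_{\max}((a^k_T)^{\top}a^k_T)\leq C_1(b^k_T)^{-1}\to0$ we have $|a^k_Tu_k|\leq C_1^{1/2}(b^k_T)^{-1/2}|u_k|$, so for $T$ large the segment $\{\theta^{\ast}_k+ta^k_Tu_k\}_{0\leq t\leq1}$ stays in $\Theta_k$ for all $|u_k|\leq R$; the event needed for $(\ref{partialH-est})$ (namely $\hat{\theta}\in\Theta$ together with the relevant segment lying in $\Theta_k$) has probability $\to1$ uniformly in $\theta^{\ast}$ by $[A6]$, since $\hat{\theta}^k_T-\theta^{\ast}_k=a^k_T\hat{u}^k_T$ is then uniformly close to $0$. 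On this event the remainder integral in $(\ref{logZ-exp})$ is at most $C(b^k_T)^{-1/2}R^3\{(b^k_T)^{-1}\sup_{\overline{\theta}_k}|\partial^3_{\theta_k}H_T|\}$, which tends to $0$ in $P_{\theta^{\ast}}$-probability uniformly in $\theta^{\ast}$ by $[A2]$ because $b^k_T\to\infty$.

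Next I would extract the gradient from $(\ref{partialH-est})$. In its leading integral the Hessian is evaluated at the moving point $\theta^{\ast}_k+t(\hat{\theta}^k_T-\theta^{\ast}_k)$; replacing it by $\theta^{\ast}_k$ costs a third derivative times the displacement $|a^k_T\hat{u}^k_T|$, which is $o_p(1)$ uniformly by $[A2]$ and the tightness of $\hat{u}^k_T$ in $[A6]$, leaving $-\partial^2_{\theta_k}H_T(\underline{\tilde{\theta}}_{k-1},\theta^{\ast}_k,\overline{\theta^{\star}}_{k+1})[a^k_Tu_k,a^k_T\hat{u}^k_T]=\Gamma^k_T(\theta^{\ast}_k,\overline{\theta^{\star}}_{k+1})[u_k,\hat{u}^k_T]$. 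The two remaining groups of terms are interaction terms: the $\overline{\theta}_{k+1}$-integral carries the bounded factor $|\overline{\theta^{\star}}_{k+1}-\overline{\hat{\theta}}_{k+1}|$ (because $\Theta$ is bounded) against $\partial_{\overline{\theta}_{k+1}}\partial_{\theta_k}H_T\,a^k_T\to0$ from $[A3]$, and the $\sum_{l<k}$-integral, after writing $\tilde{\theta}^l_T-\hat{\theta}^l_T=a^l_T(\tilde{u}^l_T-\hat{u}^l_T)$, is bounded by $|\partial_{\theta_l}\partial_{\theta_k}H_T\,a^l_T|\,|\tilde{u}^l_T-\hat{u}^l_T|\,|a^k_Tu_k|$, which vanishes by $[A3]$ (applied with the smaller index $l$) together with the uniform tightness of $\tilde{u}^l_T$ from Lemma \ref{psi-large-est-lemma} and of $\hat{u}^l_T$ from $[A6]$. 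Each of these bounds is uniform over $|u_k|\leq R$ and over $\theta^{\ast}\in\mathcal{K}$.

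Finally, collecting the surviving terms gives $\log Z^k_T(u_k)=\Gamma^k_T(\theta^{\ast}_k,\overline{\theta^{\star}}_{k+1})[u_k,\hat{u}^k_T]-\frac12\Gamma^k_T(\theta^{\ast}_k,\overline{\theta^{\star}}_{k+1})[u_k,u_k]+o_p(1)$ uniformly on $\{|u_k|\leq R\}$; the uniform convergence $\Gamma^k_T\to\Gamma^k$ and the tightness of $\hat{u}^k_T$ from $[A2]$ and $[A6]$ then let me replace $\Gamma^k_T$ by $\Gamma^k$ (using $|\Gamma^k_T[u_k,\hat{u}^k_T]-\Gamma^k[u_k,\hat{u}^k_T]|\leq\|\Gamma^k_T-\Gamma^k\|\,R\,|\hat{u}^k_T|$), and the completion of the square above finishes the proof. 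The hard part will not be any single estimate but the bookkeeping: keeping every bound simultaneously uniform in $|u_k|\leq R$ and in $\theta^{\ast}\in\mathcal{K}$, and, in the interaction terms, matching the slot into which $a^k_T$ or $a^l_T$ enters so that $[A3]$ can be invoked with the correct (smaller) index.
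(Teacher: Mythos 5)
Your proposal is correct and takes essentially the same approach as the paper's proof: both combine the Taylor expansion (\ref{logZ-exp}) with the gradient representation (\ref{partialH-est}), invoke $[A2]$, $[A3]$, $[A6]$ and Lemma \ref{psi-large-est-lemma} to reduce $\log Z^k_T$ to $\Gamma^k(\overline{\theta^{\star}}_{k+1},\theta^{\ast})[u_k,\hat{u}^k_T]-\frac{1}{2}\Gamma^k(\overline{\theta^{\star}}_{k+1},\theta^{\ast})[u_k,u_k]$ uniformly on $\{|u_k|\leq R\}$, and conclude by completing the square. You merely spell out details the paper leaves implicit (validity of the expansions near the boundary, the explicit $(b^k_T)^{-1/2}$ remainder bounds, and which index of $[A3]$ carries the scaling matrix), which is consistent with, not divergent from, the paper's argument.
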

\begin{proof}
By $[A2]$ and (\ref{logZ-exp}), we obtain 
\begin{equation*}
\sup_{|u_k|\leq R}\bigg|\log Z^k_T(u_k;\underline{\tilde{\theta}}_{k-1},\theta^{\ast}_k,\overline{\theta^{\star}}_{k+1})
-\partial_{\theta_k}H_T(\underline{\tilde{\theta}}_{k-1},\theta^{\ast}_k,\overline{\theta^{\star}}_{k+1})[a^k_Tu_k]
+\frac{1}{2}\Gamma^k(\overline{\theta^{\star}}_{k+1},\theta^{\ast})[u_k,u_k]\bigg|\to 0
\end{equation*}
as $T\to\infty$ in $P_{\theta^{\ast}}$-probability uniformly in $\theta^{\ast}\in\mathcal{K}$ for any $R>0$. 
\begin{discuss}
{\colorr
$|a_k|^2=|a_k^{\star}a_k|=\lambda_{\max}(a_k^{\star}a_k)\leq C_1b_k^{-1}$.
}
\end{discuss}

Moreover, we obtain
\begin{equation*}
\sup_{|u_k|\leq R}\bigg|\log Z^k_T(u_k;\underline{\tilde{\theta}}_{k-1},\theta^{\ast}_k,\overline{\theta^{\star}}_{k+1})
-\Gamma^k(\overline{\theta^{\star}}_{k+1},\theta^{\ast})[u_k,\hat{u}^k_T]+\frac{1}{2}\Gamma^k(\overline{\theta^{\star}}_{k+1},\theta^{\ast})[u_k,u_k]\bigg|\to 0
\end{equation*}
as $T\to\infty$ in $P_{\theta^{\ast}}$-probability uniformly in $\theta^{\ast}\in\mathcal{K}$ for any $R>0$, 
by (\ref{partialH-est}), $[A2]$, $[A3]$, $[A6]$ and Lemma \ref{psi-large-est-lemma}.
\end{proof}

\begin{discuss}
{\colorr
Ibragimov and Has'minskii Lemma 2.10.2.

Let the probability density $f_{\xi}(x)$ of a random variable $\xi$ in $\mathbb{R}^d$ satisfy $f_{\xi}(x)=f_{\xi}(-x)$ 
and the sets $\{x;f_{\xi}(x)\geq u\}$ are convex for all $u\geq 0$.
Let $l:\mathbb{R}^d\to [0,\infty)$ be a function such that $l(0)=0$, $l(x)=l(-x)$ and the set $\{x;l(x)<c\}$ be convex for any $c>0$, 
$E[l(\xi+y)]<\infty$ for $y\in \mathbb{R}^d$ and $E[l(\xi+y)]$ is continuous with respect to $y$. 
Moreover, assume that $\{x;l(x)<c_1\}\subset \{x;f_{\xi}(x)\geq u_1\}$ and the sets $\{x;f_{\xi}(x)\geq u\}$ are strictly convex and 
$\{x;f_{\xi}(x)= u\}$ is of Lebesgue measure $0$ for some $u_1>0$ and $c_1>0$ and any $u\geq u_1$.
Then $\inf_{|y|>\epsilon}E[l(\xi+y)]> E[l(\xi)]$ for any $\epsilon>0$.
  
}
\end{discuss}

We define 
\begin{equation*}
G_k(z;\theta^{\ast})=\int_{\mathbb{R}^{d_k}}w_k(u_k+z)\exp\bigg(-\frac{1}{2}\Gamma^k(\overline{\theta^{\star}}_{k+1},\theta^{\ast})[u_k,u_k]\bigg)du_k
-\int_{\mathbb{R}^{d_k}}w_k(u_k)\exp\bigg(-\frac{1}{2}\Gamma^k(\overline{\theta^{\star}}_{k+1},\theta^{\ast})[u_k,u_k]\bigg)du_k
\end{equation*}
for $z\in\mathbb{R}^{d_k}$.

\begin{lemma}\label{G-est-lemma}
Assume $[A4]$. Let $0<\delta<R$ and $w\in{\bf W}$. Then for any $\epsilon>0$, there exists $\eta>0$ such that
$\sup_{\theta^{\ast}\in\mathcal{K}}P_{\theta^{\ast}}[\inf_{R\geq |z|\geq \delta}G_k(z;\theta^{\ast})\leq \eta]<\epsilon$.
\end{lemma}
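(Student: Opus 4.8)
The plan is to reduce the statement to Anderson's inequality (Lemma 2.10.2 of Ibragimov and Has'minskii \cite{ibr-has03}) applied pathwise in $\omega$, and then to upgrade the resulting pointwise positivity to a bound that is uniform over the annulus $\{\delta\le|z|\le R\}$, over the random matrix $\Gamma^k:=\Gamma^k(\overline{\theta^{\star}}_{k+1},\theta^{\ast})$, and in $P_{\theta^{\ast}}$-probability uniformly in $\theta^{\ast}$. For a generic symmetric positive-definite $d_k\times d_k$ matrix $\Gamma$, set
\begin{equation*}
\widetilde G_k(z,\Gamma)=\int_{\mathbb{R}^{d_k}}w_k(u_k+z)e^{-\frac12\Gamma[u_k,u_k]}du_k-\int_{\mathbb{R}^{d_k}}w_k(u_k)e^{-\frac12\Gamma[u_k,u_k]}du_k,
\end{equation*}
so that $G_k(z;\theta^{\ast})=\widetilde G_k(z,\Gamma^k)$. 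For fixed positive-definite $\Gamma$, each integral is a common positive multiple of $E[w_k(\xi+z)]$ resp. $E[w_k(\xi)]$, where $\xi$ is a centered Gaussian vector with covariance $\Gamma^{-1}$, so that positivity of $\widetilde G_k(z,\Gamma)$ is equivalent to $E[w_k(\xi+z)]>E[w_k(\xi)]$. The density of $\xi$ is symmetric with strictly convex ellipsoidal superlevel sets whose boundaries are Lebesgue-null; $w_k\in{\bf W}_{p,k}$ is symmetric with convex sublevel sets that are bounded at small levels (properties $2$ and $3$), a small sublevel set of $w_k$ is therefore contained in a superlevel ellipsoid of the density, and polynomial growth (property $4$) gives $E[w_k(\xi+z)]<\infty$; these are exactly the hypotheses of the cited lemma, which yields $\widetilde G_k(z,\Gamma)>0$ for every $z\neq0$.

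First I would promote this pointwise positivity to a uniform lower bound on compact sets of matrices via continuity and compactness. I claim $\widetilde G_k$ is jointly continuous on $\{z\neq0\}\times\{\Gamma>0\}$: after the substitution $v=u_k+z$ the first integral equals $\int w_k(v)e^{-\frac12\Gamma[v-z,v-z]}dv$, whose integrand is continuous in $(z,\Gamma)$ for each $v$ even though $w_k$ itself may be discontinuous (the continuity is carried by the Gaussian factor), and on any region where $\Gamma\ge\eta_0 I$ and $z$ is bounded it is dominated by $C(1+|v|^p)e^{-\frac{\eta_0}{4}|v|^2}$; dominated convergence gives joint continuity (and, applied at $z=0$, the continuity of $y\mapsto E[w_k(\xi+y)]$ needed above). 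Consequently, for any $0<\eta_0<M_0$ the set $A:=\{\delta\le|z|\le R\}\times\{\eta_0 I\le\Gamma\le M_0 I\}$ is compact, $\widetilde G_k$ is continuous and strictly positive on $A$, and hence attains a positive minimum $\eta_1=\eta_1(\eta_0,M_0,\delta,R)>0$ there.

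Next I would control the random matrix $\Gamma^k$ using $[A2]$ and $[A4]$. By the uniform tightness of $\Gamma^k$ in $[A2]$ there is $M_0$ with $\sup_{\theta^{\ast}\in\mathcal{K}}P_{\theta^{\ast}}[\lambda_{\max}(\Gamma^k)>M_0]<\epsilon/2$. For the smallest eigenvalue, $[A4]\,2.$ directly supplies $\eta_0$ with $\sup_{\theta^{\ast}\in\mathcal{K}}P_{\theta^{\ast}}[\lambda_{\min}(\Gamma^k)\le\eta_0]<\epsilon/2$; under $[A4]\,1.$ (where $\mathcal{K}=\{\theta^{\ast}\}$ and $\Gamma^k>0$ $P_{\theta^{\ast}}$-a.s.) the events $\{\lambda_{\min}(\Gamma^k)\le\eta_0\}$ decrease to a null set as $\eta_0\downarrow0$, so such $\eta_0$ again exists. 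On the event $E=\{\eta_0 I\le\Gamma^k\le M_0 I\}$ we have $\Gamma^k(\omega)\in\{\eta_0 I\le\Gamma\le M_0 I\}$, whence $\inf_{\delta\le|z|\le R}G_k(z;\theta^{\ast})\ge\eta_1$. Taking $\eta=\eta_1/2$ gives $\{\inf_{\delta\le|z|\le R}G_k\le\eta\}\subseteq E^c$, and therefore $\sup_{\theta^{\ast}\in\mathcal{K}}P_{\theta^{\ast}}[\inf_{R\ge|z|\ge\delta}G_k(z;\theta^{\ast})\le\eta]\le\sup_{\theta^{\ast}\in\mathcal{K}}P_{\theta^{\ast}}[E^c]<\epsilon$, as required.

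The main obstacle is the uniformity: Anderson's inequality is only a pointwise statement (fixed $z$, fixed law), and the real work is to turn it into a lower bound uniform over the annulus in $z$ and over the realizations of the $\theta^{\ast}$-dependent random matrix $\Gamma^k$. The device that resolves this is compactness of the relevant set of matrices — secured precisely by the upper tightness in $[A2]$ and the non-degeneracy in $[A4]$ — together with joint continuity of $\widetilde G_k$; the one point needing care in the latter is that $w_k$ may be discontinuous (e.g. the indicator loss of Example \ref{loss-function-ex}), which is why I pass to the convolution form so that the smoothness is supplied by the Gaussian kernel rather than by $w_k$.
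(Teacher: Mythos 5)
Your argument is correct in substance, but it takes a genuinely different route from the paper's. The paper proves the lemma by contradiction: it extracts a sequence $\theta^n\to\theta^{\ast}$ in the compact set $\mathcal{K}$ along which the claimed bound fails, applies Lemma 2.10.2 of Ibragimov and Has'minskii only at the limit point $\theta^{\ast}$, and then transfers the resulting bound along the sequence using precisely the two ingredients of $[A4]\ 2.$ that you never touch: the uniform continuity in probability of $\theta\mapsto\Gamma^k(\overline{\theta^{\star}}_{k+1},\theta)$ (to replace $G_k(\,\cdot\,;\theta^n)$ by $G_k(\,\cdot\,;\theta^{\ast})$ under $P_{\theta^n}$), and the weak continuity of $\theta\mapsto P_{\theta}$ together with the continuity of $x\mapsto\Gamma^k(\overline{\theta^{\star}}_{k+1},\theta^{\ast})(x)$, which feed a Portmanteau-type argument replacing $P_{\theta^n}$ by $P_{\theta^{\ast}}$; the $\lambda_{\min}$ condition is used there only to cut off the degenerate region so that $\inf_z G_k$ is continuous in $x$. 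Your proof is instead direct and quantitative: Anderson's inequality plus joint continuity of $(z,\Gamma)\mapsto\widetilde G_k(z,\Gamma)$ and compactness of $\{\delta\le|z|\le R\}\times\{\eta_0 I\le\Gamma\le M_0 I\}$ give a deterministic lower bound $\eta_1$, and all the probabilistic content is reduced to eigenvalue control of the random matrix $\Gamma^k$. This is more elementary (no weak-convergence machinery at all), it treats both cases of $[A4]$ in one stroke rather than dismissing case 1 as ``easier,'' and it makes transparent what $\eta$ depends on. Your point that the Gaussian factor, not $w_k$, must carry the continuity is exactly right, since $w_k$ may be an indicator-type loss.

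The one defect is that your upper-eigenvalue control invokes $[A2]$, which is not among the hypotheses of the lemma (the statement assumes only $[A4]$), and the bound $\Gamma^k\le M_0 I$ is genuinely needed in your argument: $\widetilde G_k(z,\Gamma)\to 0$ as all eigenvalues of $\Gamma$ tend to infinity, so the compactness step would fail without it. This is harmless in context, since the lemma is only ever applied under the hypotheses of Theorem \ref{main}, where $[A2]$ holds; but to prove the lemma as stated you should derive the uniform tightness from $[A4]$ itself. Under $[A4]\ 1.$, $\mathcal{K}$ is a singleton and tightness of the single random matrix $\Gamma^k$ is automatic. Under $[A4]\ 2.$, the map $\theta\mapsto P_{\theta}$ is weakly continuous on the compact set $\mathcal{K}$, so $\{P_{\theta^{\ast}}\}_{\theta^{\ast}\in\mathcal{K}}$ is uniformly tight by Prohorov's theorem, and the continuity of $x\mapsto\Gamma^k(\overline{\theta^{\star}}_{k+1},\theta^{\ast})(x)$ pushes this forward: choosing a compact $C\subset\mathcal{X}$ with $\sup_{\theta^{\ast}\in\mathcal{K}}P_{\theta^{\ast}}[C^c]<\epsilon/2$, the constant $M_0=\sup_{x\in C}\lambda_{\max}(\Gamma^k(\overline{\theta^{\star}}_{k+1},\theta^{\ast})(x))$ is finite and serves in place of the one you took from $[A2]$. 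With that substitution your proof establishes the lemma exactly as stated.
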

\begin{proof}
We only consider the case $[A4]\ 2.$ is satisfied. The proof of the other case is easier. 
We assume that there exists $\epsilon>0$, $\theta^{\ast}\in \mathcal{K}$ and $\{\theta^n\}_{n\in\mathbb{N}}\subset \mathcal{K}$ such that 
$\theta^n\to\theta^{\ast}$ as $n\to\infty$ and 
\begin{equation}\label{G-contradict-est}
P_{\theta^n}[\inf_{R\geq |z|\geq \delta}G_k(z;\theta^n)\leq 1/n]\geq \epsilon,
\end{equation}
and lead to a contradiction.

By Lemma 2.10.2. in Ibragimov and Has'minskii \cite{ibr-has03}, there exists $n_0\in\mathbb{N}$ such that
\begin{equation}\label{G-contradict-est2}
P_{\theta^{\ast}}[\inf_{R\geq |z|\geq \delta}G_k(z;\theta^{\ast})\leq 1/n_0]<\epsilon/4.
\end{equation}
\begin{discuss}
{\colorr $\Gamma^k[a,a]\leq u$ and $\Gamma^k[b,b]\leq u$なら, $t\in [0,1]$に対し, 
\begin{equation*}
\Gamma^k[ta+(1-t)b,ta+(1-t)b]= t^2\Gamma^k[a,a]+2t(1-t)\Gamma^k[a,b]+(1-t)^2\Gamma^k[b,b]\leq t^2u+2t(1-t)u+(1-t)^2u=u
\end{equation*}
より, $\{x,\exp(-\Gamma^k[x,x]/2)\geq u\}$はconvex.
等号が成り立つのは, $b=sa \ (s\geq 0)$のときだけなので, $a\neq b$なら$\Gamma^k[ta+(1-t)b,ta+(1-t)b]<u$. よって
$\{x,\exp(-\Gamma^k[x,x]/2)\geq u\}$はstrictly convex.

変数変換とLebesgueの収束定理で$G$がcontinuousであることもわかるからOK.
}
\end{discuss}

By $[A4]$, there exists $\eta'>0$ such that 
\begin{equation*}
\sup_{\theta\in\mathcal{K}}P_{\theta}[|\inf_{R\geq |z|\geq \delta}G_k(z;\theta^{\ast})-\inf_{R\geq |z|\geq \delta}G_k(z;\theta')|>1/(2n_0)]<\epsilon/2
\end{equation*}
for any $\theta'$ satisfying $|\theta^{\ast}-\theta'|<\eta'$.
\begin{discuss}
{\colorr
まず, $[A4]$の$\lambda_{\min}$に関する条件を使えば, $G_k(u_k,\theta^{\ast})$の中の$u_k$の積分領域の非有界な部分をカットオフできる.
そうすれば, $[A4]$の$\Gamma^k(\theta^{\star}_{k+1},\theta^{\ast})$の連続性の条件から$G^k(z;\theta^{\ast})-G^k(z;\theta')$を$z$に
関して一様に評価できるので, $\inf$に対しても評価できる.
}
\end{discuss}
Hence there exists $n_1\in\mathbb{N}$ such that 
\begin{equation*}
\sup_{n\geq n_1}P_{\theta^n}[\inf_{R\geq |z|\geq \delta}G_k(z,\theta^{\ast})\leq 1/n_0]\geq \epsilon/2,
\end{equation*}
by (\ref{G-contradict-est}).
\begin{discuss}
{\colorr
途中式：
\begin{eqnarray}
&\geq & P_{\theta^{\ast}}[\inf G_k(z,\theta^n)\leq 1/(2n_0), |\inf G_k(z,\theta^{\ast})-\inf G_k(z,\theta^n)|\leq 1/(2n_0)] \nonumber \\
&\geq & P_{\theta^{\ast}}[\inf G_k(z,\theta^n)\leq 1/(2n_0)]-P_{\theta^{\ast}}[|\inf G_k(z,\theta^{\ast})-\inf G_k(z,\theta^n)|\leq 1/(2n_0)]. \nonumber 
\end{eqnarray}
}
\end{discuss}

Therefore, there exists $\eta''>0$ such that 
\begin{eqnarray}
P_{\theta^{\ast}}[\inf_{R\geq |z|\geq \delta}G_k(z,\theta^{\ast})\leq 1/n_0]
&\geq & P_{\theta^{\ast}}[\inf_{R\geq |z|\geq \delta}G_k(z,\theta^{\ast})1_{\{\Gamma^k(\overline{\theta^{\star}}_{k+1},\theta^{\ast})>\eta''\} }\leq 1/n_0]-\epsilon/4 \nonumber \\
&\geq & \limsup_{n\to\infty}P_{\theta^n}[\inf_{R\geq |z|\geq \delta}G_k(z,\theta^{\ast})1_{\{\Gamma^k(\overline{\theta^{\star}}_{k+1},\theta^{\ast})>\eta''\} }\leq 1/n_0]-\epsilon/4 \nonumber \\
&\geq & \limsup_{n\to\infty}P_{\theta^n}[\inf_{R\geq |z|\geq \delta}G_k(z,\theta^{\ast})\leq 1/n_0]-\epsilon/4 \geq \epsilon/4, \nonumber
\end{eqnarray}
by $[A4]$, which contradicts (\ref{G-contradict-est2}). 
\begin{discuss}
{\colorr
$\inf_{R\geq |z|\geq \delta}G_k(z;\theta^{\ast})1_{\{\Gamma^k(\overline{\theta^{\star}}_{k+1},\theta^{\ast})>\eta''\} }(x)$の$x\in\mathcal{X}$に関する連続性.
任意の$\epsilon>0$に対し, ある$M>0$と$\delta>0$があって, $|x-x'|\leq \delta$ならば
\begin{equation*}
\sup_{R\geq |z|\geq \delta}\int_{|u_k|> M}w_k(u_k+z)\exp(-\Gamma^k(\overline{\theta^{\star}}_{k+1},\theta^{\ast})[u_k,u_k]/2)du_k<\epsilon/2.
\end{equation*}
これより, $|G_k(z,\theta^{\ast},x)-G_k(z,\theta^{\ast},x')|<\epsilon$がいえる. 上と同様に$\inf$も入れる.
}
\end{discuss}
\end{proof}

\noindent
{\bf Proof of Theorem \ref{main}.}

Fix $\epsilon,\delta>0$.
By Lemma \ref{psi-large-est-lemma} and $[A6]$, there exist $R_1>\delta$ and $R_2>0$ such that
\begin{equation*}
\sup_{\theta^{\ast}\in\mathcal{K}}\sup_{T\geq T_0}P_{\theta^{\ast}}[|\tilde{u}^k_T-\hat{u}^k_T|\geq \delta] 
\leq  \sup_{\theta^{\ast}\in\mathcal{K}}\sup_{T\geq T_0}P_{\theta^{\ast}}[\psi(\hat{u}^k_T)\geq \inf_{R_1\geq |y-\hat{u}^k_T|\geq \delta} \psi(y),|\hat{u}^k_T|\leq R_2]+\epsilon/5. 
\end{equation*}
Moreover, by Lemma \ref{G-est-lemma}, there exists $\eta>0$ such that
\begin{equation*}
\sup_{\theta^{\ast}\in\mathcal{K}}P_{\theta^{\ast}}[\inf_{R_1\geq |z|\geq \delta}G_k(z;\theta^{\ast})\leq 3\eta]<\epsilon/5.
\end{equation*}
Furthermore, by $[A1\mathchar`-p]$ and $[A4]$, there exists $R_3>2R_2$ such that
\begin{eqnarray}
\sup_{\theta^{\ast}\in\mathcal{K}}\sup_{T\geq T_0}P_{\theta^{\ast}}[|\tilde{u}^k_T-\hat{u}^k_T|\geq \delta] 
&\leq & \sup_{\theta^{\ast}\in\mathcal{K}}\sup_{T\geq T_0}P_{\theta^{\ast}}\bigg[\int_{|u_k|\leq R_3}w_k(u_k-\hat{u}^k_T)Z^k_T(u_k;\underline{\tilde{\theta}}_{k-1},\theta^{\ast}_k,\overline{\theta^{\star}}_{k+1})\pi_k(\theta^{\ast}+a^k_Tu_k)du_k \nonumber \\
&& \quad \geq \inf_{R_1 \geq |y-\hat{u}^k_T|\geq \delta}\int_{|u_k|\leq R_3}w_k(u_k-y)Z^k_T(u_k;\underline{\tilde{\theta}}_{k-1},\theta^{\ast}_k,\overline{\theta^{\star}}_{k+1})\pi_k(\theta^{\ast}+a^k_Tu_k)du_k \nonumber \\
&&-\eta\inf_{\theta^{\ast}\in\mathcal{K}}\pi_k(\theta^{\ast}_k),|\hat{u}^k_T|\leq R_2\bigg]+\frac{2}{5}\epsilon, \nonumber 
\end{eqnarray}
and 
\begin{equation*}
\sup_{\theta^{\ast}\in\mathcal{K}}P_{\theta^{\ast}}\bigg[\sup_{|z|\leq R_1} \int_{|u_k|>R_3/2}w_k(u_k-z)\exp\bigg(-\frac{1}{2}\Gamma^k(\overline{\theta^{\star}}_{k+1},\theta^{\ast})[u_k,u_k]\bigg)du_k>\frac{\eta}{2}\bigg]<\frac{\epsilon}{5}.
\end{equation*}
Then continuity of $\pi_k$, $[A6]$, Lemmas \ref{logZ-lim-lemma} and \ref{G-est-lemma} yield 
\begin{eqnarray}
&&\sup_{\theta^{\ast}\in\mathcal{K}}\sup_{T\geq T_0}P_{\theta^{\ast}}[|\tilde{u}^k_T-\hat{u}^k_T|\geq \delta] \nonumber \\
\begin{discuss}
&\leq &{\colorr \sup_{\theta^{\ast}\in\mathcal{K},T\geq T_0}P_{\theta^{\ast}}\bigg[\int_{|u_k|\leq R_3}w_k(u_k-\hat{u}^k_T)\exp(-\Gamma^k[u_k-\hat{u}^k_T,u_k-\hat{u}^k_T]/2)\pi_k(\theta^{\ast}_k+a^k_Tu_k)du_k } & \nonumber \\
&&{\colorr \geq \inf_{R_1\geq |y-\hat{u}^k_T|\geq \delta}\int_{|u_k|\leq R_3}w_k(u_k-y)\exp(-\Gamma^k[u_k-\hat{u}^k_T,u_k-\hat{u}^k_T]/2)\pi_k(\theta^{\ast}_k+a^k_Tu_k)du_k -2\eta,|\hat{u}^k_T|\leq R_2\bigg]+\frac{3}{5}\epsilon } & \nonumber \\
\end{discuss}
&\leq &\sup_{\theta^{\ast}\in\mathcal{K}}P_{\theta^{\ast}}\bigg[\int_{|u_k|\leq R_2+R_3}w_k(u_k)\exp\bigg(-\frac{1}{2}\Gamma^k(\overline{\theta^{\star}}_{k+1},\theta^{\ast})[u_k,u_k]\bigg)du_k \nonumber \\
&&\quad \geq \inf_{R_1\geq |z|\geq \delta}\int_{|u_k|\leq R_3/2}w_k(u_k-z)\exp\bigg(-\frac{1}{2}\Gamma^k(\overline{\theta^{\star}}_{k+1},\theta^{\ast})[u_k,u_k]\bigg)du_k-2\eta\bigg]+\frac{3}{5}\epsilon \nonumber \\
&\leq & \sup_{\theta^{\ast}\in\mathcal{K}}P_{\theta^{\ast}}\bigg[\int_{\mathbb{R}^{d_k}}w_k(u_k)\exp\bigg(-\frac{1}{2}\Gamma^k(\overline{\theta^{\star}}_{k+1},\theta^{\ast})[u_k,u_k]\bigg)du_k \nonumber \\
&&\geq \inf_{R_1\geq |z|\geq \delta} \int_{\mathbb{R}^{d_k}}w_k(u_k-z)\exp\bigg(-\frac{1}{2}\Gamma^k(\overline{\theta^{\star}}_{k+1},\theta^{\ast})[u_k,u_k]\bigg)du_k-3\eta\bigg]+\frac{4}{5}\epsilon \nonumber \\
&\leq & \sup_{\theta^{\ast}\in\mathcal{K}}P_{\theta^{\ast}}\big[3\eta \geq \inf_{R_1\geq |z|\geq \delta}G_k(z)\big]+4\epsilon/5<\epsilon. \nonumber 
\end{eqnarray}
\qed
\newpage
\begin{discuss}
{\colorr
\begin{eqnarray}
&&\int_{|u_k|\leq R_1}w_k(u_k-y)Z^k_T(u_k)\pi_k(\theta^{\ast}_k+a^k_Tu_k)du_k \nonumber \\
&\sim &\pi_k(\theta^{\ast})\exp\bigg(\frac{1}{2}\Gamma^k(\overline{\theta^{\ast}}_{k+1},\theta^{\ast})[\hat{u}^k_T,\hat{u}^k_T]\bigg)
\int_{|u_k|\leq R_1}w_k(u_k-y)\exp\bigg(-\frac{1}{2}\Gamma^k(\overline{\theta^{\ast}}_{k+1},\theta^{\ast})[u_k-\hat{u}^k_T,u_k-\hat{u}^k_T]\bigg)du_k \nonumber \\
&=&\pi_k(\theta^{\ast})\exp\bigg(\frac{1}{2}\Gamma^k(\overline{\theta^{\ast}}_{k+1},\theta^{\ast})[\hat{u}^k_T,\hat{u}^k_T]\bigg)
\int_{|u_k+\hat{u}^k_T|\leq R_1}w_k(u_k+\hat{u}^k_T-y)\exp\bigg(-\frac{1}{2}\Gamma^k(\overline{\theta^{\ast}}_{k+1},\theta^{\ast})[u_k,u_k]\bigg)du_k. \nonumber 
\end{eqnarray}
}
\end{discuss}

\noindent
{\bf Proof of Theorem \ref{main2}.}

Since $w\in{\bf W}$, we have (\ref{wk-large-est}).
Then there exists $q'\in (q,L_2\eta)$ such that	
\begin{eqnarray}
&&\sup_{\theta^{\ast}\in \mathcal{K},T\geq T_0}P_{\theta^{\ast}}[|\tilde{u}^k_T|\geq r] \nonumber \\
&\leq &\sup_{\theta^{\ast}\in \mathcal{K},T\geq T_0}P_{\theta^{\ast}}\bigg[\inf_{|z|\geq r}\int_{\{|u_k|\leq r^{\eta}\}\cap U^k_T(\theta^{\ast})}w_k(u_k-z)Z^k_T(u_k;\underline{\tilde{\theta}}_{k-1},\theta^{\ast}_k,\overline{\theta^{\star}}_{k+1})\pi_k(\theta^{\ast}_k+a^k_Tu_k)du_k \nonumber \\
&&\leq \int_{\{|u_k|\leq r^{\eta}\}\cap U^k_T(\theta^{\ast})}w_k(u_k)Z^k_T(u_k;\underline{\tilde{\theta}}_{k-1},\theta^{\ast}_k,\overline{\theta^{\star}}_{k+1})\pi_k(\theta^{\ast}_k+a^k_Tu_k)du_k+e^{-r^{\eta}/2}\bigg]+\frac{C}{r^{q'}} \nonumber \\
&\leq &\sup_{\theta^{\ast}\in \mathcal{K},T\geq T_0}P_{\theta^{\ast}}\bigg[\bigg(\inf_{\theta_k}\pi_k(\theta_k)\bigg)e^{-r^{\eta}/3}\inf_{|z|\geq r}\int_{|u_k|\leq \delta_1}(w_k(u_k-z)-w_k(u_k))du_k\leq e^{-r^{\eta}/2}\bigg]+\frac{C}{r^{q'}}+\frac{3^{L_1}C_{\delta_1}}{r^{L_1\eta}} \nonumber \\
&\leq &\frac{C}{r^{q'}}+\frac{3^{L_1}C_{\delta_1}}{r^{L_1\eta}} \nonumber
\end{eqnarray}
for sufficiently large $r$.

Hence we have
\begin{equation*}
\sup_{\theta^{\ast}\in\mathcal{K},T\geq T_0}E_{\theta^{\ast}}[|\tilde{u}^k_T|^q]\leq \int^{\infty}_0qr^{q-1}\sup_{\theta^{\ast}\in\mathcal{K},T\geq T_0}P_{\theta^{\ast}}[|\tilde{u}^k_T|\geq r]dr<\infty.
\end{equation*}
\qed

\noindent
{\bf Proof of Theorem \ref{ergodic-diffusion-theorem}.}

We apply Corollary \ref{main-cor} 2. with $a^1_n=\sqrt{n}$, $a^2_n=\sqrt{nh_n}$.
Theorem 13 in Yoshida \cite{yos05} yields
\begin{equation*}
(\sqrt{n}(\hat{\theta}^1_n-\theta^{\ast}_1),\sqrt{nh_n}(\hat{\theta}^2_n-\theta^{\ast}_2))\to^d (\zeta_1,\zeta_2)
\end{equation*} 
as $n\to\infty$. Hence we have $[A6]$. 
Let $\eta$ be the variable in $[D5]$. then Condition $[C1\mathchar`-\eta]$ holds.
Moreover, by Lemmas 6 and 7, nonrandomness of $\Gamma^1$ and $\Gamma^2$ and a similar argument to Lemma 9, 
we have $[A2]$.
Inequality (22) and a similar argument to the proof of (24) yield $[C3\mathchar`-q,\eta]$ for any $q>1$.
Furthermore, by $[D3]$ and $[D4]$, we have $[A4]$.

Therefore, it is sufficient to show that $[A3]$ and $[C2\mathchar`-q,\eta]$ hold for any $q>1$.

Let $p>d_1+d_2$, then by Sobolev's inequalities, we have
\begin{eqnarray}
E\bigg[\bigg(\frac{1}{\sqrt{n}}\sup_{\theta\in\Theta}|\partial_{\theta_1}\partial_{\theta_2}H_n(\theta)|\bigg)^p \bigg]
&\leq &C\sup_{\theta\in\Theta}E\bigg[\bigg(\frac{1}{\sqrt{n}}|\partial_{\theta_1}\partial_{\theta_2}H_n(\theta)|\bigg)^p \bigg] 
+C\sup_{\theta\in\Theta}E\bigg[\bigg(\frac{1}{\sqrt{n}}|\partial^2_{\theta_1}\partial_{\theta_2}H_n(\theta)|\bigg)^p \bigg]\nonumber \\
&&+C\sup_{\theta\in\Theta}E\bigg[\bigg(\frac{1}{\sqrt{n}}|\partial_{\theta_1}\partial^2_{\theta_2}H_n(\theta)|\bigg)^p \bigg]. \nonumber
\end{eqnarray}

Since
\begin{equation*}
\partial_{\theta_1}\partial_{\theta_2}H_n(\theta)=\sum_{i=1}^n\partial_{\theta_1}(B(X_{(i-1)h_n},\theta_1)^{-1})[\partial_{\theta_2}a(X_{(i-1)h_n},\theta_2),X_{ih_n}-X_{(i-1)h_n}-ha(X_{(i-1)h_n},\theta_2)],
\end{equation*}
we have $E[(n^{-1/2}\sup_{\theta\in\Theta}|\partial_{\theta_1}\partial_{\theta_2}H_n(\theta)|)^p]\to 0$
by the Burkholder-Davis-Gundy inequality and $nh_n^2\to 0$.
Hence we obtain $[A3]$.

On the other hand, we have
\begin{eqnarray}
H_n(\theta^{\ast}_1+n^{-1/2}u_1,\theta^{\star}_2)-H_n(\theta^{\ast}_1,\theta^{\star}_2)
&=&n^{-1/2}\partial_{\theta_1}H_n(\theta^{\ast}_1,\theta^{\star}_2)[u_1]+\partial^2_{\theta_1}H_n(\theta^{\ast}_1,\theta^{\star}_2)[u_1^{\otimes 2}]/(2n) \nonumber \\
&&+\frac{1}{n\sqrt{n}}\int^1_0\frac{(1-t)^2}{2}\partial^3_{\theta_1}H_n(\theta^{\ast}_1+t\frac{u_1}{\sqrt{n}},\theta^{\star}_2)[u_1^{\otimes 3}]dt, \nonumber
\end{eqnarray}
and 
\begin{eqnarray}
H_n(\tilde{\theta}^1_n,\theta^{\ast}_2+(nh_n)^{-1/2}u_2)-H_n(\tilde{\theta}^1_n,\theta^{\ast}_2)
&=&(nh_n)^{-1/2}\partial_{\theta_2}H_n(\tilde{\theta}^1_n, \theta^{\ast}_2)[u_2]+\partial^2_{\theta_2}H_n(\tilde{\theta}^1_n,\theta^{\ast}_2)[u_2^{\otimes 2}]/(2nh_n) \nonumber \\
&&+(nh_n)^{-3/2}\int^1_0\frac{(1-t)^2}{2}\partial^3_{\theta_2}H_n(\tilde{\theta}^1_n,\theta^{\ast}_2+t\frac{u_2}{\sqrt{nh_n}})[u_2^{\otimes 3}]dt \nonumber
\end{eqnarray}
if $\{\theta^{\ast}_1+tn^{-1/2}u_1\}_{0\leq t\leq 1}\subset \Theta_1$ and $\{\theta^{\ast}_2+t(nh_n)^{-1/2}u_2\}_{0\leq t\leq 1}\subset \Theta_2$.

Hence for any $L>0$, there exists $C>0$ and $n_0\in\mathbb{N}$ such that
\begin{eqnarray}
\sup_{n\geq n_0}P\bigg[\sup_{|u_1|\leq R}|H_n(\theta^{\ast}_1+n^{-1/2}u_1,\theta^{\star}_2)-H_n(\theta^{\ast}_1,\theta^{\star}_2)|\geq r\bigg] \leq \frac{C}{r^L} \quad (r>0),  \nonumber 
\end{eqnarray}
by Lemmas 3, 6 and 7 in Yoshida \cite{yos05}. Similarly, we have
\begin{eqnarray}
\sup_{n\geq n_0}P\bigg[\sup_{|u_2|\leq R}|H_n(\tilde{\theta}^1_n,\theta^{\ast}_2+(nh_n)^{-1/2}u_2)-H_n(\tilde{\theta}^1_n,\theta^{\ast}_2)|\geq r\bigg] \leq \frac{C}{r^L} \quad (r>0).  \nonumber 
\end{eqnarray}
These estimates yield $[C2\mathchar`-q,\eta]$ for any $q>0$.
\qed

\bibliographystyle{plain}

\begin{discuss}
{\colorg
やること
\begin{itemize}
\item Theorem 3の証明で$[A2]$をチェックしているところもう一度確認
\item simaltaneous estimationもやらないとだめ. $K$の数は$H_T$の収束レートから自動的に決まる.
\item $\mathcal{X}_T$で考えなくていいのか? 1つにまとめられるか. 
\item Andersen's lemmaあっているかチェック. strictly convexのdefをチェック.
\item 今回の結果がほんとにMasudaに適用できるかチェックする.
\end{itemize}
余裕があったらやること
\begin{itemize}
\item $\overline{\theta^{\star}}_{k+1}$を$T$に依存するr.v.に変えるか
\item $H$の微分可能性を仮定しないで, 
$\log Z(u)-\Gamma[\hat{u}_T,\hat{u}_T]/2+\Gamma[u_k-\hat{u}^k_T,u_k-\hat{u}^k_T]\to^p 0$
を示すための十分条件を考えるか.
\end{itemize}
}
\end{discuss}
\end{document}